\documentclass[a4paper,oneside,11pt]{article}
\author{Federico Venturelli}
\date{\today}
\title{Gysin morphisms for non-transversal hyperplane sections with an application to line arrangements}

\usepackage[a4paper,top=3cm,bottom=3cm,left=2.4cm,right=2.4cm,bindingoffset=7mm]{geometry}
\usepackage[T1]{fontenc}
\usepackage[english]{babel}
\usepackage{amsmath,amssymb,amsthm,amsfonts}
\usepackage{mathtools}
\usepackage[all,cmtip]{xy} 
\usepackage{indentfirst}
\usepackage{graphicx}
\usepackage{enumerate}
\usepackage{verbatim}
\usepackage{mathrsfs}
\usepackage{xcolor}
\usepackage{multicol,float}
\usepackage{caption}
\usepackage{tikz}
\usetikzlibrary{matrix}
\usetikzlibrary{arrows}
\usetikzlibrary{graphs}
\usetikzlibrary{backgrounds}
\usetikzlibrary{fadings}
\usepackage{url}
\usepackage{bbm}
\usepackage[bookmarksdepth=5]{hyperref}
\usepackage{changepage}

\newcommand{\sq}{\square}
\newcommand{\tr}{\triangle}
\newcommand{\bu}{\bullet}
\newcommand{\al}{\alpha}
\newcommand{\be}{\beta}
\newcommand{\eps}{\varepsilon}
\newcommand{\la}{\lambda}
\newcommand{\KK}{\mathbb{K}}
\newcommand{\PP}{\mathbb{P}}
\newcommand{\NN}{\mathbb{N}}
\newcommand{\ZZ}{\mathbb{Z}}
\newcommand{\QQ}{\mathbb{Q}}

\newcommand{\CC}{\mathbb{C}}
\newcommand{\Ab}{\overline{\A}}
\newcommand{\A}{\mathcal{A}}
\newcommand{\C}{\mathcal{C}}
\newcommand{\F}{\mathcal{F}}
\newcommand{\G}{\mathcal{G}}
\newcommand{\N}{\mathcal{N}}
\newcommand{\X}{\mathcal{X}}
\newcommand{\floor}[1]{\lfloor #1 \rfloor}

\theoremstyle{plain}
\newtheorem{thm}{Theorem}[section]
\newtheorem*{thm*}{Theorem}
\newtheorem{thmI}{Theorem}

\newtheorem*{prob*}{Problem}
\newtheorem{conj}{Conjecture}
\newtheorem*{conj*}{Conjecture}
\newtheorem{lem}[thm]{Lemma}
\newtheorem{prop}[thm]{Proposition}
\theoremstyle{definition}
\newtheorem{defn}[thm]{Definition}
\newtheorem{rmk}[thm]{Remark}

\numberwithin{equation}{section}

\begin{document}

\maketitle

\begin{abstract}
We prove the existence of Gysin morphisms for hyperplane sections that may not satisfy the usual hypotheses of the Lefschetz hyperplane theorem. As an application, we show the triviality of the Alexander polynomial of a particular class of non-symmetric line arrangements, thus providing positive evidence for a conjecture of Papadima and Suciu.
\end{abstract}

\section*{Introduction}

We work over the field of complex numbers $\CC$. Let $X$ be a closed subvariety of $\PP^n$ and $Y:=X\cap H$ be a hyperplane section; in order to compare the cohomology groups of $X$ and $Y$ one needs some form of the Lefschetz hyperplane theorem. The typical situations in which this theorem can be used are:

\begin{enumerate}[(a)]
\item (Classical version) Both $X$ and $Y$ are smooth.
\item (Modern formulation) There exists a Whitney stratification $\A$ of $\PP^n$ such that $X=\cup_{i=1}^k A_i$ with $A_i\in\A$ and $H$ is transversal to $A_i$ for $i=1,\ldots,k$ (see for example \cite[Theorem 1.6.5]{D1} and references therein). 
\end{enumerate}
In both cases above, the comparison is provided by Gysin morphisms $H^{k}(Y)\rightarrow H^{k+2}(X)$ which are isomorphisms for $k>\dim(Y)$ and a surjection for $k=\dim(Y)$.

In this paper we prove a version of the Lefschetz hyperplane theorem that allows one to find such Gysin morphisms even in some cases in which neither (a) nor (b) are satisfied. Key to our result are cubical hyperresolutions of varieties, which can be thought of as a way of resolving a variety `at all levels'. Indeed, when computing a resolution one usually stops when the exceptional divisor has simple normal crossings, but one could go on and resolve the singularities of the exceptional divisor and so on. Cubical hyperresolutions are a precise formalisation of this `inductive resolution' procedure. 

Cubical hyperresolutions were used in \cite{GNPP} to prove another version of the Lefschetz hyperplane theorem, which was the main motivation for our work. The precise statement is the following:

\begin{thmI}\label{thm:Guillen}
\cite[Corollaire III.3.12]{GNPP} Let $X$ be a quasi-projective complex variety and $Y$ be a hyperplane section of $X$ satisfying the following hypotheses:

\begin{enumerate}[(i)]
\item There exists an augmented $m$-cubical hyperresolution $X_{\sq}\rightarrow X$ such that $Y_{\sq}:=X_{\sq}\times_{X}Y$ is a cubical hyperresolution of $Y$.
\item For any $\al\in\sq_m^*$, there exists a closed immersion $Y_{\al}\hookrightarrow X_{\al}$ of codimension $1$.
\end{enumerate}
Then there exist Gysin morphisms between de Rham cohomology groups

\begin{equation*}
H_{DR}^k(Y)\rightarrow H_{DR}^{k+2}(X)
\end{equation*}
which are isomorphisms for $k>\dim(Y)$ and a surjection for $k=\dim(Y)$.
\end{thmI}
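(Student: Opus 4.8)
The plan is to build the Gysin morphism by gluing together the classical Gysin maps of the smooth members of the hyperresolution, and then to read off its properties from an Artin-type vanishing for the open complement $U:=X\setminus Y$. Fix an augmented $m$-cubical hyperresolution $a\colon X_\sq\to X$ as in hypothesis~(i). By~(i) the base change $Y_\sq=X_\sq\times_X Y$ is a cubical hyperresolution of $Y$, so every $Y_\al$ is smooth, and by~(ii) every $Y_\al\hookrightarrow X_\al$ is a smooth divisor; likewise $U_\sq:=X_\sq\times_X U$ is a cubical hyperresolution of $U$, its members $U_\al=X_\al\setminus Y_\al$ being open in the smooth $X_\al$. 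Recall moreover that de Rham cohomology is computed through such a hyperresolution, for instance by the spectral sequence
\begin{equation*}
E_1^{p,q}=\bigoplus_{|\al|=p+1}H_{DR}^q(X_\al)\ \Longrightarrow\ H_{DR}^{p+q}(X),
\end{equation*}
and similarly for $Y$ through $Y_\sq$ and for $U$ through $U_\sq$.

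For each $\al\in\sq_m^*$, purity for the smooth codimension-one closed immersion $Y_\al\hookrightarrow X_\al$ furnishes a Gysin morphism $g_\al\colon H_{DR}^k(Y_\al)\to H_{DR}^{k+2}(X_\al)$ together with the associated Gysin exact sequence of the pair $(X_\al,U_\al)$. The essential point --- and it is here that~(ii) is genuinely used --- is that the $g_\al$ are compatible with the structure maps of the cube. Indeed, for $\be\subseteq\al$ the square with vertices $X_\al,X_\be,Y_\al,Y_\be$ is cartesian, since $Y_\gamma=X_\gamma\times_X Y$ for every $\gamma$ forces $Y_\al=X_\al\times_{X_\be}Y_\be$; and because $Y_\al$ has codimension exactly $1$ in $X_\al$ by~(ii), this square is Tor-independent, so the base change identity $(X_\al\to X_\be)^*\circ g_\be=g_\al\circ(Y_\al\to Y_\be)^*$ holds. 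Consequently the $g_\al$ assemble into a morphism of the total complexes computing $H_{DR}^*(Y)$ and $H_{DR}^*(X)$ through the hyperresolutions; on the spectral sequences it is induced by $E_1^{p,q}(Y)\to E_1^{p,q+2}(X)$, which commutes with $d_1$ precisely because the $g_\al$ commute with the faces of the cube. Passing to cohomology produces the sought Gysin morphism $g\colon H_{DR}^k(Y)\to H_{DR}^{k+2}(X)$, and assembling the Gysin sequences of the pairs $(X_\al,U_\al)$ in the same manner produces a long exact sequence
\begin{equation*}
\cdots\to H_{DR}^k(Y)\xrightarrow{\ g\ }H_{DR}^{k+2}(X)\to H_{DR}^{k+2}(U)\to H_{DR}^{k+1}(Y)\xrightarrow{\ g\ }H_{DR}^{k+3}(X)\to\cdots.
\end{equation*}

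It remains to prove the vanishing $H_{DR}^j(U)=0$ for $j>\dim U=\dim Y+1$, the equality holding because~(ii) forces $H$ to contain no irreducible component of $X$, so that $U$ is dense in $X$. When $X$ is projective, $U=X\cap(\PP^n\setminus H)$ is a closed subvariety of $\PP^n\setminus H\cong\mathbb{A}^n$, hence affine, and the vanishing is Artin's theorem, read through the comparison $H_{DR}^*\cong H^*(-;\CC)$; this is the situation relevant to the application to line arrangements, and also the point at which a properness-type hypothesis on $X$ really enters. Granting the vanishing, a chase of the exact sequence concludes: if $k>\dim Y$ then $k+2>\dim Y+1$ and $k+1>\dim Y+1$, so $H_{DR}^{k+2}(U)=H_{DR}^{k+1}(U)=0$ and $g$ is an isomorphism; if $k=\dim Y$ then still $k+2>\dim Y+1$, so $H_{DR}^{k+2}(U)=0$ and $g$ is surjective.

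The main obstacle I anticipate is the gluing carried out in the second paragraph: one must make precise that the classical Gysin morphisms of the smooth strata --- defined a priori one stratum at a time --- are natural enough, with respect to the far-from-transversal structure maps of a hyperresolution, to descend to a single morphism between the complexes computing $H_{DR}^*(Y)$ and $H_{DR}^*(X)$, and to do so compatibly with the mixed Hodge structures, using the descent formalism of \cite{GNPP}. Hypothesis~(ii) is exactly the clean-intersection (Tor-independence) condition that rescues the base change formula for the Gysin maps; without it the $g_\al$ need not be compatible with the faces of the cube and no global Gysin morphism can exist. Two further points demanding care are the verification that $U_\sq$ is genuinely a cubical hyperresolution of $U$ with naturally assembling Gysin sequences, and --- should one wish to treat the general quasi-projective case --- the verification that Artin vanishing still applies to the complement $U$.
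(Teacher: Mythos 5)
This statement is not proved in the paper at all: it is quoted verbatim from \cite[Corollaire III.3.12]{GNPP}, and the closest thing to an in-paper proof is the argument for Theorem \ref{thm:risultato} in Section 2, which follows the same overall route you chose (purity for the smooth codimension-one pieces of the hyperresolution, assembly by cohomological descent, then the long exact sequence of the pair $(X,X\setminus Y)$ together with the vanishing of de Rham cohomology of the affine complement above $\dim X$, and finally the comparison with singular/classical cohomology). So your global strategy and the final diagram chase are the right ones, and your bookkeeping of degrees is correct.

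The genuine gap is at the assembly step, which you yourself flag as ``the main obstacle'' but do not resolve, and which is precisely the technical content of the theorem. You define the Gysin maps $g_{\al}$ only on cohomology groups of the pieces and argue they commute with the face maps of the cube; but a family of maps on $E_1$-terms commuting with $d_1$ is only a morphism of $E_1$-pages, and this does not induce a morphism of the abutments $H_{DR}^k(Y)\rightarrow H_{DR}^{k+2}(X)$ (nor does ``assembling the Gysin sequences of the pairs $(X_{\al},U_{\al})$'' produce a long exact sequence for the singular $X$, $Y$, $U$) unless the $g_{\al}$ are lifted to morphisms of the complexes computing the two sides, compatibly with the cubical structure, in the derived category. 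The way this is actually done (by Hartshorne, by \cite{GNPP}, and by the paper in the proof of Theorem \ref{thm:risultato}) is to work with cohomology with supports at the level of sheaf complexes: the trace map of Lemma \ref{lem:tracemaps} gives a \emph{functorial} isomorphism $i_*\Omega^{\bu}_{Y_{\al}}\xrightarrow{\simeq}R\Gamma_{Y_{\al}}\Omega^{\bu}_{X_{\al}}[2]$ in $D_+(Sh(X_{\al}))$; its functoriality is what replaces your Tor-independence/base-change discussion and makes the maps compatible with the faces of the cube, and applying $R\eps_*$ and descent yields $j_*DR^{\bu}_Y\simeq R\Gamma_Y DR^{\bu}_X[2]$, hence $H_{DR}^k(Y)\simeq H_{DR,Y}^{k+2}(X)$, after which composing with $H_{DR,Y}^{k+2}(X)\rightarrow H_{DR}^{k+2}(X)$ and using the affine vanishing concludes exactly as in your last paragraph. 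Without this derived-level purity statement (or an equivalent lift of the $g_{\al}$ to maps of complexes), your ``passing to cohomology produces the sought Gysin morphism'' does not go through; with it, the excess-intersection/Tor-independence argument becomes unnecessary.
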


One can check that conditions (i) and (ii) above are satisfied if either (a) or (b) is satisfied.

The main result of this paper is the following:

\begin{thmI}\label{thm:risultato}
Let $X\subset\PP^n$ be a quasi-projective variety with singular locus $\Sigma_X$, $H\subset\PP^n$ be a hyperplane and $Y:=X\cap H$ be the corresponding hyperplane section with singular locus $\Sigma_Y$. Denote by $\tilde{X}$ and $\tilde{Y}$ resolutions of $X$ and $Y$, and by $D_X$ and $D_Y$ the corresponding exceptional divisors. 

Assume that:

\begin{enumerate}[(i)]
\item There exist $m$-cubical hyperresolutions $H(Y)_{\sq}$ and $H(X)_{\sq}$ of the resolution squares $S(Y)$ and $S(X)$ associated to $Y$ and $X$ and a closed immersion $H(Y)_{\sq}\hookrightarrow H(X)_{\sq}$.
\item There exists an $m_1$-cubical hyperresolution ${D_Y}_{\sq}$ (resp. ${D_X}_{\sq}$) of $D_Y$ (resp. $D_X$) and an $m_2$-cubical hyperresolution ${\Sigma_Y}_{\sq}$ (resp. ${\Sigma_X}_{\sq}$) of $\Sigma_Y$ (resp. $\Sigma_X$) such that:
\begin{enumerate}[(I)]
\item For all $I\in\sq^*_{m_1}$ there exists a closed immersion ${D_Y}_I\hookrightarrow {D_X}_I$ which restricts to a codimension one closed immersion on each irreducible component of ${D_Y}_I$.
\item There exists $c\in\{0,1\}$ such that for all $I\in\sq^*_{m_2}$ there exists a closed immersion ${\Sigma_Y}_I\hookrightarrow {\Sigma_X}_I$ which restricts to a codimension $c$ closed immersion on each irreducible component of ${\Sigma_Y}_I$.
\end{enumerate}
\end{enumerate}
We have the following:

\begin{enumerate}
\item If $c=1$ then there exist Gysin morphisms $H^k(Y)\rightarrow H^{k+2}(X)$ which are isomorphisms for $k>\dim(Y)$ and a surjection for $k=\dim(Y)$.
\item if $c=0$ then the conclusion of point 1. holds for $k>2\dim(\Sigma_X)+1$.
\end{enumerate}
\end{thmI}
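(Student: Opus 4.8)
The plan is to reproduce the proof of Theorem~\ref{thm:Guillen} under the weaker combinatorial hypotheses at hand; one cannot simply quote that theorem, since its hypothesis~(i) requires $Y_{\sq}=X_{\sq}\times_X Y$ to be a hyperresolution of $Y$, and this typically fails here --- it is exactly the non-transversality the paper is concerned with. The key point is that the resolution squares $S(X)$, $S(Y)$ together with the cubical hyperresolutions of $D_X,\Sigma_X,D_Y,\Sigma_Y$ furnished by (i)--(ii) can nonetheless be assembled into genuine cubical hyperresolutions of $X$ and of $Y$ carrying a term-wise closed immersion between them, and this is what makes the argument go through. Concretely, I would first run the ``descent/reduction'' construction of \cite[Ch.~I]{GNPP}: starting from $S(X)$ --- whose vertices are $\tilde X$ (already smooth), $\Sigma_X$, $D_X$ and $X$ --- and substituting for $\Sigma_X$ and $D_X$ the hyperresolutions ${\Sigma_X}_{\sq}$, ${D_X}_{\sq}$ of (ii), one produces an $n$-cubical hyperresolution $X_{\sq}\to X$ (for a suitable $n$) whose terms, for $\al\in\sq_n^*$, are $\tilde X$, the ${D_X}_I$, or the ${\Sigma_X}_J$; the same recipe applied to $S(Y)$ gives $Y_{\sq}\to Y$. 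The closed immersions $\tilde Y\hookrightarrow\tilde X$ (which is part of the datum $H(Y)_{\sq}\hookrightarrow H(X)_{\sq}$ and has codimension $1$ on each component because $Y$ is a hyperplane section of $X$), ${D_Y}_I\hookrightarrow{D_X}_I$ (codimension $1$, by (I)) and ${\Sigma_Y}_J\hookrightarrow{\Sigma_X}_J$ (codimension $c$, by (II)) then glue --- compatibly with the cubical structure maps, this coherence being supplied by $H(Y)_{\sq}\hookrightarrow H(X)_{\sq}$ and the functoriality of the reduction --- into a term-wise closed immersion $\iota\colon Y_{\sq}\hookrightarrow X_{\sq}$ lying over $Y\hookrightarrow X$.

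With this in place, for each $\al$ at which $\iota_\al\colon Y_\al\hookrightarrow X_\al$ has codimension $1$ one has the classical Gysin map $g_\al\colon H^q(Y_\al)\to H^{q+2}(X_\al)$ of a smooth codimension-one closed immersion, and these are compatible with the pullbacks of the cubes; passing to the hyperresolution spectral sequences $E_1^{p,q}(X)=\bigoplus_{|\al|=p+1}H^q(X_\al)\Rightarrow H^{p+q}(X)$ and likewise for $Y$, the $g_\al$ induce a morphism $E_r^{p,q}(Y)\to E_r^{p,q+2}(X)$ that converges to a morphism $H^k(Y)\to H^{k+2}(X)$. For the range one uses the Lefschetz property at each smooth term --- exactly the geometric input underlying Theorem~\ref{thm:Guillen}: since $\dim Y_\al\le\dim Y$, the map $g_\al$ is an isomorphism for $q>\dim Y$ and a surjection for $q=\dim Y$. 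When $c=1$ every column of the $E_1$-pages is matched by a $g_\al$, and the standard spectral-sequence comparison (checking injectivity and surjectivity on the relevant boundary terms) shows that $H^k(Y)\to H^{k+2}(X)$ is an isomorphism for $k>\dim Y$ and a surjection for $k=\dim Y$; this is point~1.

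For point~2 the $\Sigma$-terms carry no degree-two Gysin map, so instead one exploits a dimension bound: the ${\Sigma_X}_J$ and ${\Sigma_Y}_J$ are smooth of dimension $\le\dim\Sigma_X$, hence $H^q({\Sigma_X}_J)=H^q({\Sigma_Y}_J)=0$ for $q>2\dim\Sigma_X$, so the ``$\Sigma$-columns'' of $E_1(X)$ and of $E_1(Y)$ are concentrated in cohomological degree $q\le 2\dim\Sigma_X$. One then checks that, once the differentials $d_r$ are taken into account (they move the $\Sigma$-columns by one unit of total degree at a time and can feed the $\tilde X$- and $D$-columns), these columns can no longer affect the abutment in total degree $>2\dim\Sigma_X+1$; on the complementary part of the spectral sequence the codimension-one Gysin maps are available and the argument of the previous paragraph goes through, yielding the conclusion of point~1 in the range $k>2\dim\Sigma_X+1$. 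I expect the main obstacle to be twofold: first, verifying that the data of (i)--(ii) really does organise into compatible cubical hyperresolutions with a term-wise immersion --- there are coherence conditions among the structure maps of the sub-hyperresolutions of $D$ and $\Sigma$ that have to be propagated through the reduction; second, the $c=0$ bookkeeping, namely pinning down exactly how far the bounded-degree $\Sigma$-columns can contaminate the abutment through the higher differentials, which is what produces the shift to $2\dim\Sigma_X+1$ rather than $2\dim\Sigma_X$.
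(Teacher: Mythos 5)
There is a genuine gap, and it sits exactly where your argument produces the Lefschetz range. You propose to get the range from a ``Lefschetz property at each smooth term'': that since $\dim Y_\al\le\dim Y$, each term-wise Gysin map $g_\al:H^q(Y_\al)\to H^{q+2}(X_\al)$ is an isomorphism for $q>\dim Y$ and surjective for $q=\dim Y$. This is false for a general smooth codimension-one closed immersion: the Lefschetz-type behaviour of a Gysin map requires the complement $X_\al\setminus Y_\al$ to be affine (ampleness of the divisor), and the pairs occurring here are nothing of the sort --- they are exceptional divisors and their strata (e.g. ${D_Y}_I\subset{D_X}_I$), and hypothesis (ii)(I) explicitly allows components of ${D_X}_I$ with \emph{no} corresponding component of ${D_Y}_I$ (this actually happens in the paper's application, where whole planes of $D_X$ miss $D_Y$), so the term-wise map can be $0\to H^{q+2}(X_\al)\ne 0$ even in high degree. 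For a concrete failure take $X_\al=C\times\PP^2$ with $g(C)>0$ and $Y_\al=\{pt\}\times\PP^2$: the Gysin map $H^3(Y_\al)\to H^5(X_\al)$ is $0\to H^1(C)\otimes H^4(\PP^2)\ne 0$. Relatedly, Gysin maps do not commute with the pullbacks along the cube's structure maps without base-change/excess-intersection hypotheses, so even the asserted morphism of hyperresolution spectral sequences needs justification. Your plan also never uses the one place where ampleness genuinely is available, namely that $Y$ is a hyperplane section of $X$ itself, so $X\setminus Y$ is affine; without that input the numerical range in the conclusion cannot be produced.

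The paper's mechanism is different and avoids term-wise Lefschetz altogether. From hypothesis (i) and cohomological descent for the hyperresolutions of the resolution squares one gets two Mayer--Vietoris-type long exact sequences: one relating $H^{\bu}_{DR}(Y)$ to $H^{\bu}_{DR}(\tilde Y)\oplus H^{\bu}_{DR}(\Sigma_Y)$ and $H^{\bu}_{DR}(D_Y)$, and one relating the corresponding de Rham groups of $X$ \emph{with supports} in $Y$, $\tilde Y$, $\Sigma_Y$, $D_Y$. Hypothesis (ii) is then fed into Hartshorne's trace-map lemma, applied entrywise in the hyperresolutions of $D$ and $\Sigma$, to produce purity isomorphisms $H^{\bu}_{DR}(D_Y)\simeq H^{\bu+2}_{DR,D_Y}(D_X)$ and $H^{\bu}_{DR}(\Sigma_Y)\simeq H^{\bu+2c}_{DR,\Sigma_Y}(\Sigma_X)$ valid in \emph{all} degrees (these are local-cohomology statements, not Lefschetz statements). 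Comparing the two long exact sequences via these isomorphisms yields $H^k_{DR}(Y)\simeq H^{k+2}_{DR,Y}(X)$ when $c=1$ (and for $k>2\dim\Sigma_X+1$ when $c=0$, since only then do the $\Sigma$-terms drop out of the comparison). Only at the last step does the hyperplane hypothesis enter: $X\setminus Y$ is affine, so $H^j_{DR}(X\setminus Y)=0$ for $j\ge\dim X+1$, and the long exact sequence of the pair $(X,X\setminus Y)$ converts $H^{k+2}_{DR,Y}(X)\to H^{k+2}_{DR}(X)$ into a surjection at $k=\dim Y$ and an isomorphism for $k>\dim Y$. If you want to salvage your outline, you would have to replace the ``Lefschetz at each term'' step by exactly this two-stage argument: purity with supports through the hyperresolutions first, Artin vanishing for the affine complement $X\setminus Y$ second.
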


For the terminology regarding cubical hyperresolutions and cubical varieties, as well as for the definition of (algebraic) de Rham cohomology groups, we refer to Section 1.

Observe that if $c=1$ then we obtain the same Gysin morphisms provided by the usual Lefschetz hyperplane theorem; one can check that if (b) holds then we are in this situation.

We were able to find an example in which the hypotheses of Theorem \ref{thm:risultato} are satisfied, but (a), (b) and those of Theorem \ref{thm:Guillen} are not. Our example relies on the fact that both $X$ and $Y$ have isolated and very simple singularities, which makes it easy to control the cubical hyperresolutions; this suggests that as the dimension and complexity of $\Sigma_X$ and $\Sigma_Y$ increase finding examples in which conditions (i) and (ii) of Theorem \ref{thm:risultato} are satisfied becomes much harder.

Using Theorem \ref{thm:risultato} we were able to prove the triviality of the Alexander polynomial of a particular class of line arrangements in $\PP^2$. The study of the Alexander polynomial of line arrangements is a vast subject, lying at the crossroads of topology, combinatorics and geometry; for this reason, in this introduction and in Section 3 we focus only on the aspects we care the most about.

The Alexander polynomial $\Delta_C$ of a reduced plane curve $C=V(f)$ of degree $d$ is the characteristic polynomial of the algebraic monodromy acting on $H^1(F,\CC)$, where $F$ is any fibre of the Milnor fibration $f:\CC^3\setminus f^{-1}(0)\rightarrow\CC^*$; it is known that $\Delta_C$ depends on the type and relative position of the singular points of $\CC$, and that it can be written as

\begin{equation*}
\Delta_C(t)=(t-1)^{r-1}\prod_{1<k|d}\Phi_k(t)^{e_k}=(t-1)^{r-1}q(t)
\end{equation*} 
where $r$ is the number of irreducible components of $C$ and $\Phi_k$ is the $k$-th cyclotomic polynomial. When $f$ factors into linear forms $C$ is called a line arrangement, and is usually denoted by $\Ab$. Such curves have been intensively studied by mathematicians interested in the Alexander polynomial, as one hopes to relate the latter to the combinatorial structure of the line arrangement encoded in its intersection semilattice $L(\Ab)$; in particular, it is natural to ask the following questions:

\begin{enumerate}
\item Does $\Delta_{\Ab}$ depend only on $L(\Ab)$? 
\item Are there necessary or sufficient conditions in order to have $q(t)\neq 1$ that depend only on $L(\Ab)$?
\end{enumerate}

Regarding question 1., in \cite[Conjecture 1.9]{PS} Papadima and Suciu formulated the following conjecture:

\begin{conj*}
The Alexander polynomial of a line arrangement $\Ab$ has the form

\begin{equation*}
\Delta_{\Ab}(t)=(t-1)^{|\Ab|-1}\Phi_3(t)^{e_3}[\Phi_2(t)\Phi_4(t)]^{e_4}
\end{equation*}
where $e_3=\beta_3(\Ab)$ and $e_2=e_4=\beta_2(\Ab)$. 
\end{conj*}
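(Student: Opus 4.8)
The plan is to translate the computation of $\Delta_{\Ab}$ into the cohomology of rank-one local systems on the complement $M:=\PP^2\setminus\Ab$, and then to control that cohomology by means of the resonance varieties of the Orlik--Solomon algebra. Writing $d=|\Ab|$, let $F$ be the Milnor fibre with monodromy $h$; then the multiplicity $e_k$ of the cyclotomic factor $\Phi_k$ in $q(t)$ is the dimension of the eigenspace $H^1(F,\CC)_\zeta$ for a primitive $k$-th root of unity $\zeta$. By Libgober's theorem this eigenspace is isomorphic to $H^1(M,\mathcal{L}_\zeta)$, where $\mathcal{L}_\zeta$ is the rank-one local system sending each meridian to $\zeta$; equivalently, $e_k$ counts the jumps of $H^1$ along the diagonal characters of order $k$ lying in the first characteristic variety $\mathcal{V}_1(M)\subset(\CC^*)^d$. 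Thus the conjecture splits into a \emph{support} statement, namely that the only primitive roots of unity meeting $\mathcal{V}_1(M)$ on the diagonal have order $2,3,4$, and a \emph{multiplicity} statement expressing the corresponding jumps in terms of the mod-$p$ Aomoto--Betti numbers $\beta_2(\Ab),\beta_3(\Ab)$.

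First I would establish the support statement, i.e.\ that $k\in\{2,3,4\}$. By the structure theorem for characteristic varieties of arrangement complements (Arapura; Cohen--Suciu; Libgober--Yuzvinsky), every irreducible component of $\mathcal{V}_1(M)$ is the translate by a torsion character of a subtorus arising from a pencil, and its positive-dimensional components correspond to the multinets supported on $\Ab$. The order of the torsion translate is governed by the number of classes of the multinet, so it suffices to bound the admissible multinet structures. This is the point where the combinatorics of $L(\Ab)$ enters: invoking the classification of multinets on line arrangements, which forces essential multinets to have either $3$ or $4$ classes, one gets characters of order $3$ (from $3$-nets, contributing $\Phi_3$) or of order $2$ and $4$ (from $4$-multinets, contributing $\Phi_2$ and $\Phi_4$), while the isolated torsion components are constrained analogously by the local contributions at points of multiplicity $3$ and $4$.

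Next I would pin down the multiplicities via the Tangent Cone Theorem. The component of $\mathcal{V}_1(M)$ through the trivial character has tangent cone equal to the resonance variety $\mathcal{R}_1(M)$, and more generally the dimension of $H^1(M,\mathcal{L}_\zeta)$ at a torsion character of order $p^s$ is computed, through the $p$-modular reduction of the Aomoto complex, by the Aomoto--Betti number $\beta_p(\Ab)$. The Papadima--Suciu modular bound gives $e_{p^s}\ge\beta_p(\Ab)$ for every $s$; specialising to $p=3$ yields $e_3\ge\beta_3(\Ab)$ and to $p=2$ yields $e_2,e_4\ge\beta_2(\Ab)$. To upgrade these to the equalities $e_3=\beta_3(\Ab)$ and $e_2=e_4=\beta_2(\Ab)$ one must prove the reverse inequalities, which amounts to showing that the relevant torsion characters are isolated points of $\mathcal{V}_1(M)$ lying on no translated component of higher depth, so that the jump in $H^1$ is exactly the quantity predicted by $\beta_p(\Ab)$.

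The hard part will be precisely this last step: controlling the translated (non-essential) torsion components of $\mathcal{V}_1(M)$ uniformly, equivalently proving that the Tangent Cone Theorem holds at every root of unity of the relevant order without an a priori tameness or formality hypothesis. This is exactly the mechanism by which such equalities fail for general reduced curves, and it is why the full conjecture remains out of reach: a complete proof would require either a classification of all (weak) multinets admissible on an arbitrary line arrangement, or a uniform vanishing theorem excluding exotic torsion points of order $>4$ on the diagonal. Accordingly, in this paper I do not attack the general conjecture; rather, for the special class of arrangements under consideration I use Theorem \ref{thm:risultato} to show directly that $H^1(F,\CC)$ carries no nontrivial monodromy, i.e.\ $q(t)=1$, which corresponds to $\beta_2(\Ab)=\beta_3(\Ab)=0$ and is consistent with the conjectured formula.
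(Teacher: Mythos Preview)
The statement you were asked to address is presented in the paper explicitly as an open \emph{conjecture} of Papadima and Suciu; the paper does not prove it and makes no claim to. There is therefore no ``paper's own proof'' against which to compare your proposal. Your final paragraph correctly recognises this: the paper instead establishes the special case of Theorem~\ref{thm:AP} (arrangements in which every line passes through one of two fixed points), obtaining $q(t)=1$ directly and checking consistency with the conjectured formula via $\beta_2(\Ab)=\beta_3(\Ab)=0$.

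As a roadmap toward the general conjecture your outline via characteristic varieties, multinet classification, and the tangent cone theorem is the standard framework, and you correctly isolate the genuine obstruction (uniform control of translated torsion components of $\mathcal{V}_1(M)$). One technical slip: the Papadima--Suciu modular bound reads $e_{p^s}\le\beta_p(\Ab)$, not $\ge$; the known inequality is the \emph{upper} bound, so the missing ``reverse inequality'' needed for the conjecture is the lower bound $e_{p^s}\ge\beta_p(\Ab)$, opposite to what you wrote. Note also that the paper's treatment of its special case does not pass through characteristic varieties or multinets at all: it uses the Gysin surjection of Theorem~\ref{thm:risultato} to bound $\dim H^4(X)_{\text{prim}}$ by $\dim H^2(Y)_{\text{prim}}^{T_\phi}$, controls the latter via the polar filtration on $H^3(\PP^3\setminus Y)$, and then extracts the triviality of $\Delta_{\Ab}$ from a Thom--Sebastiani decomposition (Lemma~\ref{lem:ThomIso}). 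So even where you and the paper agree on the conclusion for the special class, the methods are entirely disjoint.
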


The $\beta_i(\Ab)$ are the modular Aomoto-Betti numbers of $\Ab$, and they depend only on $L(\Ab)$ and $i$ (see \cite[Section 3]{PS}).

As for question 2. many examples in the literature suggest that in order for $q(t)$ to be different from $1$ it is necessary that $\Ab$ admits a multinet; the latter is a purely combinatorial notion that may be thought of as a formalisation of the idea of `highly symmetric arrangement' (see \ref{defn:multinets}). 

We prove the following:

\begin{thmI}\label{thm:AP}
Let $\Ab$ be a line arrangement s.t. any line passes through at least one of two given points $P_1$ and $P_2$. Then the Alexander polynomial of $\Ab$ is trivial, i.e. $\Delta_{\Ab}(t)=(t-1)^{|\Ab|-1}$.
\end{thmI}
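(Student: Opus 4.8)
The plan is to convert the triviality of $\Delta_{\Ab}$ into a cohomological vanishing and then to produce that vanishing by applying Theorem \ref{thm:risultato} to a hyperplane section built directly from the equation of $\Ab$. Write $d=|\Ab|$, fix a defining polynomial $f$ of $\Ab$, and let $F=\{f=1\}\subset\CC^3$ be the affine Milnor fibre with monodromy $h$. Since $\dim H^1(F,\CC)^{h}=\dim H^1(\PP^2\setminus\Ab,\CC)=d-1$ for \emph{every} line arrangement, the claim $\Delta_{\Ab}(t)=(t-1)^{d-1}$ is equivalent to $b_1(F)=d-1$, i.e.\ to the vanishing of the non-unipotent part of the monodromy action on $H^1(F,\CC)$; this is the statement I would prove. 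I would keep track throughout of the $\ZZ/d$-action given by $h$, and of the fact that $F$ sits inside its projective completion as the complement of (a copy of) $\Ab$.

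Geometrically, I would use $\bar F:=\{f(x_0,x_1,x_2)=x_3^{d}\}\subset\PP^3$, the $d$-fold cyclic cover of $\PP^2$ branched along $\Ab$: it contains $F$ as the complement of the hyperplane section $C:=\bar F\cap\{x_3=0\}\cong\Ab$, the monodromy $h$ is the restriction of the action $x_3\mapsto\zeta_d x_3$, and $\bar F/(\ZZ/d)=\PP^2$, with $C$ fixed pointwise. Because every line of $\Ab$ passes through $P_1$ or $P_2$, the only singularities of $\bar F$ are isolated: rational double points ($A_{d-1}$) over the nodes of $\Ab$, and one cyclic cover of an ordinary multiple point over each of $P_1$, $P_2$. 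I would then realise $\bar F$ as a \emph{non-transversal} hyperplane section of a threefold: with $W:=\{f(x_0,x_1,x_2)=x_3x_4^{\,d-1}\}\subset\PP^4$ one has $\bar F=W\cap\{x_3=x_4\}$, and a direct computation shows that $\Sigma_W$ is the union of the lines joining $[0{:}0{:}0{:}1{:}0]$ to the points $[P{:}0{:}0]$ with $P\in\Sigma_{\Ab}$, while $\Sigma_{\bar F}$ consists of those points $[P{:}0{:}0]$, one on each line. Thus $\Sigma_{\bar F}\hookrightarrow\Sigma_W$ is a closed immersion of codimension one on each component, and likewise for the exceptional divisors of the resolutions, so the pair $(W,\bar F)$ sits in the case $c=1$ of Theorem \ref{thm:risultato}, once its hypotheses are verified.

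That verification is the technical heart and the step I expect to be hardest. Since all the singularities occurring are isolated cyclic covers of an ordinary multiple point (over $P_1,P_2$) or $A$-type points (over the nodes), one can resolve them explicitly; I would use these explicit resolutions to construct the cubical hyperresolutions $H(\bar F)_\sq$, $H(W)_\sq$ of the resolution squares together with the closed immersion $H(\bar F)_\sq\hookrightarrow H(W)_\sq$, and the cubical hyperresolutions of $D_{\bar F},D_W$ and of $\Sigma_{\bar F},\Sigma_W$, and then check conditions (I) and (II) with $c=1$ by hand. This is precisely the place where the hypothesis ``every line through $P_1$ or $P_2$'' is indispensable: it forces the singular loci to be this small and this explicit, so that the hyperresolutions and the codimension bookkeeping remain manageable. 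Theorem \ref{thm:risultato} then supplies Gysin morphisms $H^k(\bar F)\to H^{k+2}(W)$ which are isomorphisms for $k>2$ and a surjection for $k=2$.

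To harvest the conclusion, I would feed these Gysin morphisms into the localization sequences of $\bar F\subset W$ and of $C\subset\bar F$. The sequence of $C\subset\bar F$ gives, $\ZZ/d$-equivariantly, $0\to H^1(\bar F)\to H^1(F)\to\ker\!\big(i_*\colon H_2(C)\to H^2(\bar F)\big)\to 0$, where $i_*$ sends the class of a line $\ell\subset C$ to its cycle class $[\ell]\in H^2(\bar F)$. Now two points finish the argument. First, $H_2(C)$ carries the trivial $\ZZ/d$-action (as $C$ is fixed pointwise) and every $[\ell]$ is $h$-invariant, hence lies in $H^2(\bar F,\QQ)^{\ZZ/d}=H^2(\bar F/(\ZZ/d),\QQ)=H^2(\PP^2,\QQ)$, a one-dimensional space; since $\sum_\ell[\ell]=[C]$ is the non-zero hyperplane class, $\mathrm{rk}\,i_*=1$, so $\ker i_*$ has dimension $d-1$ and is entirely unipotent. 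Second, $H^1(\bar F)$ has no $\ZZ/d$-invariant part ($H^1(\bar F)^{\ZZ/d}=H^1(\PP^2)=0$), so it contributes only to the non-unipotent part; this is exactly the group that the Gysin morphisms obtained from Theorem \ref{thm:risultato}, together with the vanishing of $H^1$ of the relevant structure sheaves and normality, pin down and force to vanish (it is the obstruction that an ordinary, transversal Lefschetz theorem would kill but which the non-transversality here makes inaccessible without Theorem \ref{thm:risultato}). Combining the two points yields $b_1(F)=d-1$, i.e.\ $\Delta_{\Ab}(t)=(t-1)^{|\Ab|-1}$.
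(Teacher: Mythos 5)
Your reduction ($\Delta_{\Ab}$ trivial iff $b_1(F)=d-1$) and your geometric setup (the cyclic cover $\overline{F}=\{f=x_3^d\}\subset\PP^3$ realised as the hyperplane section $\{x_3=x_4\}$ of $W=\{f=x_3x_4^{d-1}\}\subset\PP^4$) are coherent, but the final harvesting step contains a fatal gap. The exact sequence $0\to H^1(\overline{F})\to H^1(F)\to\ker\bigl(H_2(C)\to H^2(\overline{F})\bigr)\to 0$ presupposes the duality $H^2_C(\overline{F})\simeq H_2^{BM}(C)\simeq\QQ^{d}$, which requires $\overline{F}$ to be smooth (or at least a rational homology manifold) along $C$. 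Here every singular point of $\overline{F}$ lies on $C=\{x_3=0\}$, so $H^2_C(\overline{F})$ is strictly larger than $\QQ^{d}$, and the excess local cohomology at those points is exactly where the non-unipotent part of $H^1(F)$ lives. A sanity check exposes the failure: nothing in this step uses the hypothesis on $P_1,P_2$ (for \emph{any} arrangement $\Sigma_W$ is a union of lines and $\Sigma_{\overline{F}}$ a finite set, so the same bookkeeping would go through), and since $H^1(\overline{F})=0$ for every surface in $\PP^3$ with isolated singularities, your sequence would force $b_1(F)\leq d$ and hence triviality of $\Delta_{\Ab}$ for \emph{every} line arrangement --- contradicted by the Fermat/Ceva arrangements. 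Separately, the Gysin morphisms supplied by Theorem \ref{thm:risultato} are isomorphisms only for $k>\dim\overline{F}=2$ and surjective for $k=2$; they carry no information whatsoever about $H^1(\overline{F})$, so they cannot ``pin down and force to vanish'' that group as you claim.

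The paper avoids this trap by working one dimension higher: it takes the \emph{threefold} $X=V(y^n+z^n-f)\subset\PP^4$, so that by Thom--Sebastiani (Lemma \ref{lem:ThomIso}) the group $H^4(X)_{\mathrm{prim}}\simeq H^3(F_{y^n+z^n-f})^{T}\simeq\bigoplus_{\al}H^1(F_{y^n+z^n})_{1-\al}\otimes H^1(F_f)_{\al}$ records \emph{all} the eigenspaces of the Milnor monodromy of $\Ab$, including the non-unipotent ones you need to kill. It then bounds this group from above via the Gysin surjection $H^2(Y)\twoheadrightarrow H^4(X)$ at the middle degree $k=\dim Y$ (the one degree where Theorem \ref{thm:risultato} actually gives information), cut down to $H^2(Y)^{T_\phi}_{\mathrm{prim}}$ using the monodromy of the pencil of hyperplane sections through $P_1$ and the global invariant cycle theorem, and estimated explicitly through the polar filtration on $H^3(\PP^3\setminus Y)$. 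The hypothesis that every line passes through $P_1$ or $P_2$ enters essentially there: it makes the generic member of that pencil a surface defined by a polynomial in the four variables $y,z,x_0,x_2$ with singularities simple enough for both the hyperresolution hypotheses and the eigenspace count $\dim V^{T_\phi}\leq(n-1)^2$. Any repair of your argument would have to replace the localization sequence for $C\subset\overline{F}$ by something that sees the local cohomology at the singular points of $\overline{F}$, which is in effect what the paper's construction does.
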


Since arrangements of this type do not admit multinets, this reinforces the idea that the existence of multinets is necessary in order to have $q(t)\neq 1$; moreover, our result is consistent with the conjecture by Papadima and Suciu, as $\beta_2(\Ab)=\beta_3(\Ab)=0$. 

The key steps of the proof of Theorem \ref{thm:AP} are the following:

\begin{enumerate}
\item We associate to $\Ab$ a threefold $X\subset\PP^4$ and a fibred threefold $\psi:X'\rightarrow \PP^1$ such that any fibre of $\psi$ is isomorphic to a hyperplane section $Y$ of $X$. Then we choose a generic fibre $Y$ of $\psi$, we explicitly compute the monodromy action $\phi$ on it and we show that $X$ and $Y$ satisfy the hypotheses of Theorem \ref{thm:risultato}.
\item We prove that the surjective Gysin morphism $H^2(Y)\twoheadrightarrow H^4(X)$ remains surjective if we restrict the domain to the fixed part of $H^2(Y)$ under the action of the algebraic monodromy $T_{\phi}$ (Proposition \ref{prop:monodromy}) and that it gives a surjective morphism between the primitive parts (Lemma \ref{lem:primitive}).
\item We bound the dimension of $H^2(Y)^{T_{\phi}}$ using the inclusion relation between the Hodge and polar filtration on $H^{\bu}(\PP^3\setminus Y)$ and the explicit description of the graded pieces of the latter in terms of differential forms; we then use a Thom-type result (Lemma \ref{lem:ThomIso}) to deduce from this bound our result on $\Delta_{\Ab}$.
\end{enumerate}

The paper is organised as follows. In Section 1 we give a brief account of the theory of cubical hyperresolutions, following \cite{GNPP} and \cite[Section 5]{PeSt}, showing in particular a sketch of their standard construction. Section 2 is devoted to the proof of Theorem \ref{thm:risultato}. Section 3 deals with the Alexander polynomial of curves and line arrangements, presenting the definition of multinet as well as some interesting known results, and it is meant to give the reader an idea of how the combinatorics of $\Ab$ can affect $\Delta_{\Ab}$ as well as of how Theorem \ref{thm:AP} fits in the picture; it closes with some sparse facts that we shall need in Section 4, which are placed here for lack of a better alternative. Lastly, Section 4 constitutes the proof of Theorem \ref{thm:AP}. 

\section{Cubical hyperresolutions and de Rham cohomology}

\begin{defn}
\begin{enumerate}
\item The $n$\textit{-semisimplicial category} is the category $\tr_n$ with objects the sets $[m]:=\{0,\ldots,m\}$ for $0\leq m\leq n$ and with morphisms the strictly increasing maps $[m]\rightarrow [m']$. The $n$\textit{-cubical category} is the category $\sq_n$ with objects the subsets of $[n-1]$ and with $\text{Hom}(I,J)$ consisting of a single element if $I\subset J$ and empty otherwise. We denote by $\sq_n^*$ the full subcategory of $\sq_n$ whose objects are the non-empty subsets of $[n-1]$
\item If $\C$ is any category, an $n$\textit{-cubical $\C$-object} is a contravariant functor $K_{\sq}:\sq_n\rightarrow\C$, and morphisms between such objects are morphisms of the corresponding functors; $K^*_{\sq}$ denotes the restriction of $K_{\sq}$ to $\sq_n^*$. Similarly, we can define $n$\textit{-semisimplicial $\C$-objects} $K_{\bu}$ and morphisms thereof. We will use the notations $K_I:=K_{\sq}(I)$ and $K_m:=K_{\bu}([m])$.
\item If $S$ is any object in $\C$, the \textit{constant} $n$\textit{-cubical $\C$-object} $S$ is the contravariant functor $S_{\sq}:\sq_n\rightarrow\C$ such that $S_I=S$ for all $I\in\sq_n$, with all morphisms $S_I\rightarrow S_J$ given by the identity of $S$. An \textit{augmentation} of an $n$-cubical $\C$-object $K_{\sq}$ to $S$ is a morphism of $n$-cubical $\C$-objects $K_{\sq}\rightarrow S$. If we replace $\sq_n$ by $\tr_n$, we obtain \textit{constant $n$-semisimplicial $\C$-objects} and augmentations thereof.
\end{enumerate}
\end{defn}

The next observations will be useful in what follows:

\begin{rmk}\label{rmk:cubobj}
\begin{enumerate}
\item If $X_{\sq}$ is an $n$-cubical $\C$-object we can associate to it the augmented $n$-cubical $\C$-object $\eps:X_{\sq}\rightarrow X_{\emptyset}$; sometimes we will call this augmentation the \textit{natural augmentation}.
\item Any $(n+1)$-cubical $\C$-object $X_{\sq}$ can be considered as a morphism $Y_{\sq}\rightarrow Z_{\sq}$ of $n$-cubical $\C$-objects by setting $Z_I:=X_I$ and $Y_I:=X_{I\cup\{n\}}$ for $I\in\sq_n$; in particular, a $1$-cubical $\C$-object is the datum of two objects $X,Y\in\C$ and a morphism $f:X\rightarrow Y$ between them.
\item To any $(n+1)$-cubical $\C$-object $X_{\sq}$ we can associate functorially an $n$-semisimplicial $\C$-object $X_{\bu}$ and an augmentation $\eps:X_{\bu}\rightarrow X_{\emptyset}$. We set:

\begin{equation*}
X_k:=\coprod_{|I|=k+1}X_I\hspace{0.5cm}\text{for }k=0,\ldots,n.
\end{equation*}
Let $\be:[s]\rightarrow [r]$ be a strictly increasing map (in particular $r\geq s$). If $I\in\sq_{n+1}$ has cardinality $r+1$ we can write it as $I=\{i_0,\ldots,i_r\}$ with $i_0<\dots<i_r$; the set $J:=\be(I):=\{i_{\be(0)},\ldots,i_{\be(s)}\}$ is contained in $I$, so we have a morphism $d_{JI}:X_I\rightarrow X_J$. We can now define the morphism

\begin{equation*}
d_{\be}:X_r\rightarrow X_s\hspace{0.5cm}\text{s.t. }(d_{\be})_{|X_I}=d_{\be(I)I}.
\end{equation*}
Since for any $I\in\sq_{n+1}$ we have a morphism $d_{\emptyset I}:X_I\rightarrow X_{\emptyset}$ we obtain the desired augmentation by setting $\eps_{|X_I}:=d_{\emptyset I}$.
\end{enumerate}
\end{rmk}

\begin{defn}
The category $TopAbSh$ has objects the pairs $(X,\F)$ where $X$ is a topological space and $\F$ is a sheaf of abelian groups on $X$, and as morphisms the pairs $(f,f^{\#}):(X,\F)\rightarrow (Y,\G)$ where $f:X\rightarrow Y$ is a continuous function and $f^{\#}:\G\rightarrow f_*\F$ is a morphism of sheaves of abelian groups on $Y$. A \textit{sheaf of abelian groups} $\F^{\bu}$ (resp. $\F^{\sq}$) on an $n$-semisimplicial space (resp. on an $n$-cubical space) is just an $n$-semisimplicial (resp. $n$-cubical) $TopAbSh$-object. In a similar manner, we can define complexes and resolutions of sheaves of abelian groups on $n$-semisimplicial or $n$-cubical spaces.
\end{defn}

Given a sheaf of abelian groups $\F^{\bu}$ on an $n$-semisimplicial space $X_{\bu}$, it is possible to define the cohomology of $X_{\bu}$ with values in $\F^{\bu}$; indeed, using the Godement resolutions of each $\F^m$ and differentials coming from the face maps of $\tr_n$, one obtains a double complex $F^{\bu,\bu}$ and sets

\begin{equation}
H^k(X_{\bu},\F^{\bu}):=H^k(s(F^{\bu,\bu}))
\end{equation}
where $s(F^{\bu,\bu})$ is the simple complex associated to $F^{\bu,\bu}$.

\begin{rmk}
If $Y$ is a constant $n$-semisimplicial space, any sheaf of abelian groups on $Y$ will be denoted by $\F$ and not by $\F^{\bu}$; likewise, the cohomology groups of $Y$ with values in $\F$ will be denoted by $H^k(Y,\F)$.
\end{rmk}

Assume that $\eps:X_{\bu}\rightarrow Y$ is an augmented $n$-semisimplicial space and $\F^{\bu}$ is a sheaf of abelian groups on $X_{\bu}$. The sheaves $\eps_*\C_{Gdm}^p(\F^q)$ form a double complex of sheaves of abelian groups on $Y$, whose associated simple complex gives

\begin{equation}
R\eps_*\F^{\bu}:=s[\eps_*\C_{Gdm}^{\bu}(\F^{\bu})].
\end{equation}
One can prove that the hypercohomology of the latter complex coincides with the cohomology of $X_{\bu}$ with values in $\F^{\bu}$, i.e.

\begin{equation}\label{eq:isocohom}
\mathbb{H}^k(Y,R\eps_*\F^{\bu})=H^k(X_{\bu},\F^{\bu})\hspace{0.5cm}\text{for any }k.
\end{equation}

\begin{defn}
\cite[Definition 5.3.2]{De} An augmented $n$-semisimplicial space $\eps:X_{\bu}\rightarrow Y$ is \textit{of cohomological descent} if for any sheaf of abelian groups $\F$ on $Y$ the natural adjunction morphism

\begin{equation}\label{eq:CohomDesc}
\F\rightarrow R\eps_*\eps^{-1}\F
\end{equation}
is an isomorphism in $D_+(Sh(Y))$. 
\end{defn}

\begin{rmk}
\begin{enumerate}
\vspace{1mm}
\item Observe that if $\eps:X_{\bu}\rightarrow Y$ is of cohomological descent then $H^k(Y,\F)\simeq H^k(X_{\bu},\eps^{-1}\F)$ for any $k$ and for any sheaf of abelian groups $\F$ on $Y$.
\item If $X_{\sq}$ is an $(n+1)$-cubical space and $\F^{\sq}$ is a sheaf of abelian groups on $X$, by point 3. of Remark \ref{rmk:cubobj} to this data we can associate an augmented $n$-semisimplicial space $\eps:X_{\bu}\rightarrow X_{\emptyset}$ and a sheaf of abelian groups $\F^{\bu}$ on it. We set

\begin{equation}\label{eq:cono}
C^{\bu}(X_{\sq},\F^{\sq}):=\text{Cone}^{\bu}[\F^{\emptyset}\rightarrow R\eps_*\eps^{-1}\F^{\emptyset}].
\end{equation}
\end{enumerate}
\end{rmk}

From now on, we will take for $\C$ the category whose objects are reduced separated schemes of finite type over $\CC$, which we will simply call varieties, and whose morphisms are morphisms of schemes; this is not fully consistent with the existing literature, in which the term `algebraic variety' is usually reserved for \textit{integral} separated schemes of finite type over some field. 

\begin{defn}
\begin{enumerate}
\item An augmented $n$-semisimplicial variety is of cohomological descent if this is the case for the associated augmented $n$-semisimplicial space.
\item Let $X$ be a variety. An $n$-\textit{semisimplicial resolution} of $X$ is an $n$-semisimplicial variety $\eps:X_{\bu}\rightarrow X$ augmented to $X$ such that all maps $X_m\rightarrow X$ are proper, all $X_m$ are smooth and $\eps$ is of cohomological descent.
\item An $(n+1)$-cubical variety is of cohomological descent (resp. a \textit{cubical hyperresolution}) if the associated augmented $n$-semisimplicial variety is of cohomological descent (resp. an $n$-semisimplicial resolution). By the definition of cohomological descent and (\ref{eq:cono}), $X_{\sq}$ is of cohomological descent if and only if $C^{\bu}(X_{\sq},\F^{\sq})$ is acyclic for any sheaf of abelian groups $\F^{\sq}$ on $X_{\sq}$.
\end{enumerate}
\end{defn}

We want to show that any variety $X$ admits an $m$-cubical hyperresolution $X_{\sq}$ for some $m$.

\begin{defn}
\begin{enumerate}
\item A \textit{proper modification} of a variety $S$ is a proper morphism $f:X\rightarrow S$ such that there exists $U\subset S$ open and dense for which $f$ induces an isomorphism $f^{-1}(U)\rightarrow U$; a \textit{resolution} of $S$ is a proper modification with $X$ smooth.
\item The \textit{discriminant} of a proper morphism of varieties $f:X\rightarrow S$ is the minimal closed subset $D\subset S$ such that $f$ induces an isomorphism $X\setminus f^{-1}(D)\rightarrow S\setminus D$.
\end{enumerate}
\end{defn}  

The notions of proper modification, resolution and discriminant extend immediately to $n$-cubical varieties and morphisms thereof. By \cite[Th\'eor\`eme I.2.6]{GNPP} any $n$-cubical variety $X_{\sq}$ admits a resolution, which is constructed by separating and resolving the irreducible components of each $X_I$ and then `patching together' the pieces in a way prescribed by the cubical structure of $X_{\sq}$.

\begin{defn}
Let $f:X_{\sq}\rightarrow S_{\sq}$ be a proper modification (resp. resolution) of an $n$-cubical variety. A \textit{discriminant square} (resp. \textit{resolution square}) for $f$ is a commutative diagram

\[\xymatrixcolsep{1.5cm}\xymatrixrowsep{1.5cm}\xymatrix{
E_{\sq} \ar[r]^j \ar[d] & X_{\sq} \ar[d]^f\\
D_{\sq} \ar[r]^i                  & S_{\sq}}
\]
where the horizontal maps are closed immersions and $f$ induces an isomorphism between $X_{\sq}-j(E_{\sq})$ and $S_{\sq}-i(D_{\sq})$ (i.e. $i(D_{\sq})$ contains the discriminant of $f$).
\end{defn}

\begin{lem}
\cite[Lemma 5.20]{PeSt} The $(n+2)$-cubical variety defined by a discriminant square for a proper modification of an $n$-cubical variety is of cohomological descent.
\end{lem}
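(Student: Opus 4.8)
The plan is to prove the statement first for $n=0$, i.e.\ for an ordinary discriminant square of varieties, which carries all the geometric content, and then to bootstrap to general $n$ by a formal ``Fubini'' argument on total complexes that reduces everything to the case $n=0$, applied one face of the base cube at a time. For $n=0$, write the square as
\[
\xymatrix{ E \ar[r]^j \ar[d]_{f'} & X \ar[d]^f \\ D \ar[r]^i & S }
\]
with $f$ a proper modification which is an isomorphism over $U:=S\setminus i(D)$, and set $g:=f\circ j=i\circ f'$. Unwinding the definition of $C^{\bu}$ — using \eqref{eq:isocohom}, \eqref{eq:cono} and the fact that the associated $1$-semisimplicial variety has $Z_0=X\sqcup D$ and $Z_1=E$ — cohomological descent of the square amounts to the commutative square of complexes of sheaves on $S$
\[
\xymatrix{ \F \ar[r] \ar[d] & Rf_*f^{-1}\F \ar[d] \\ i_*i^{-1}\F \ar[r] & Rg_*g^{-1}\F }
\]
being homotopy cartesian for every sheaf $\F$ on $S$, i.e.\ to the cones of its two horizontal arrows being isomorphic. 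Put $\mathcal K:=\text{Cone}[\F\to Rf_*f^{-1}\F]$. Since $f$ is an isomorphism over $U$ we have $\mathcal K|_U\simeq 0$, so the adjunction unit $\mathcal K\to i_*i^{-1}\mathcal K$ (with $i$ a closed immersion, so that $Ri_*=i_*$) is a quasi-isomorphism, as one checks on stalks. On the other hand, proper base change along the cartesian square (here $f$ is proper) gives $i^{-1}Rf_*f^{-1}\F\simeq Rf'_*f'^{-1}i^{-1}\F$, whence $i^{-1}\mathcal K\simeq\text{Cone}[i^{-1}\F\to Rf'_*f'^{-1}i^{-1}\F]$; applying the exact functor $i_*$ then identifies $\mathcal K$ with $\text{Cone}[i_*i^{-1}\F\to Rg_*g^{-1}\F]$, the cone of the bottom arrow. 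A routine check that this identification is compatible with the maps out of $\F$ and $i_*i^{-1}\F$ completes the case $n=0$.

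For general $n$, identify $\sq_{n+2}\cong\sq_n\times\sq_2$ by splitting off the last two coordinates, which carry the square structure, so that an index becomes a pair $(P,Q)$ and $Z_{(P,Q)}$ equals $S_P$, $X_P$, $D_P$ or $E_P$ according to whether $Q=\emptyset$, $\{n\}$, $\{n+1\}$ or $\{n,n+1\}$. Unwinding the definitions, $C^{\bu}(Z_{\sq},\F)$ is quasi-isomorphic to the total complex of the $(n+2)$-cubical diagram $I\mapsto R(\eps_I)_*\eps_I^{-1}\F$, where $\eps_I\colon Z_I\to S_{\emptyset}$ is the structure map, and by Fubini for total complexes this equals $\text{Tot}_P\big(\text{Tot}_Q[\,Q\mapsto R(\eps_{(P,Q)})_*\eps_{(P,Q)}^{-1}\F\,]\big)$. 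For fixed $P$, the factorisation of each $\eps_{(P,Q)}$ through the base structure map $\sigma_P\colon S_P\to S_{\emptyset}$ shows that the inner total complex is quasi-isomorphic to $R\sigma_{P*}\,C^{\bu}(\mathcal E_P,\sigma_P^{-1}\F)$, where $\mathcal E_P$ is the $2$-cubical variety obtained by evaluating the four corners of the discriminant square at $P$. One checks that $\mathcal E_P$ is again a discriminant square of varieties: $f_P\colon X_P\to S_P$ is a proper modification because $f$ restricts to an isomorphism over a dense open, the horizontal maps are closed immersions, and the isomorphism locus is the complement of $i(D_P)$. Hence, by the case $n=0$, the complex $C^{\bu}(\mathcal E_P,\sigma_P^{-1}\F)$ is acyclic; therefore so is its derived pushforward, so is the inner total complex for every $P$, and so, finally, is the outer total complex, being that of a bounded complex of acyclic complexes. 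Thus $C^{\bu}(Z_{\sq},\F)$ is acyclic for all $\F$, which is the assertion.

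The real work, and the only place where I expect any difficulty, is the case $n=0$, and within it the two points where the geometry — rather than pure homological algebra — enters: that $f$ being a proper modification forces the cone $\mathcal K$ to be supported on $i(D)$, and the appeal to proper base change over the cartesian square. (One should keep in mind that in the standard construction of discriminant squares $E_{\sq}=f^{-1}(D_{\sq})$, so that these squares are genuinely cartesian, which is what proper base change requires.) The reduction to general $n$ is purely formal once the identification $\sq_{n+2}\cong\sq_n\times\sq_2$ and the total-complex description of $C^{\bu}$ are in place; the only labour there is the sign bookkeeping in the total complex and the compatibility of the various structure maps with the cubical structure, both routine.
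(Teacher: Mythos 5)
The paper itself offers no proof of this lemma: it is quoted directly from Peters--Steenbrink, so there is no in-paper argument to measure you against. Judged on its own terms, your proof is correct, and its $n=0$ core is essentially the standard argument behind the cited result (the cone of $\F\to Rf_*f^{-1}\F$ is supported on the discriminant, then proper base change identifies it with the cone of the bottom row); the reduction of the cubical case to $n=0$ via $\sq_{n+2}\cong\sq_n\times\sq_2$ and Fubini for total complexes is a correct and clean way to handle the relative statement.

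A few points worth making explicit. Your parenthetical worry about cartesianness is unnecessary: with the definition used here, commutativity of the square gives $j(E)\subseteq f^{-1}(i(D))$, while the requirement that $f$ map $X\setminus j(E)$ isomorphically onto $S\setminus i(D)$ gives the reverse inclusion, so $j(E)=f^{-1}(i(D))$ and the square is automatically cartesian on underlying topological spaces, which is all that topological proper base change needs (note that base change here, and hence the whole argument, lives in the classical topology, which is the setting of the cited source); the same identity $f^{-1}(U)=X\setminus j(E)$ is what really justifies $\mathcal K|_U\simeq 0$. Second, ``the cones of the two horizontal arrows being isomorphic'' must mean isomorphic via the natural map induced by the two adjunction units, so the compatibility check you defer is genuinely part of the proof (a standard but necessary diagram chase with the base-change morphism). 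Third, in the Fubini step the cleanest formulation is to filter the total complex of the full $(n+2)$-cube by $|P|$: the subquotients are exactly the inner totalisations $R\sigma_{P*}C^{\bu}(\mathcal E_P,\sigma_P^{-1}\F)$ (pulling $R\sigma_{P*}$ out of a finite totalisation is legitimate since the latter is an iterated cone), and acyclicity column by column kills the outer complex without your having to control the $P$-direction maps at all; one should also note that the componentwise squares $\mathcal E_P$ may be degenerate (for instance $D_P=S_P$, $E_P=X_P$, or empty entries), but your $n=0$ argument covers these cases verbatim. With these points spelled out the proposal is a complete proof.
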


We can now state the main result we shall need on cubical hyperresolutions:

\begin{thm}\label{thm:iperrescub}
Any variety $X$ admits an $(n+1)$-cubical hyperresolution $X_{\sq}$.
\end{thm}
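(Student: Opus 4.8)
The plan is to argue by induction on the dimension of $X$, using the construction sketched after the definition of resolution square. The base case $\dim X = 0$ is trivial: a $0$-dimensional variety is already smooth, so the constant $1$-cubical variety $X \to X$ (with the identity augmentation) is a cubical hyperresolution. For the inductive step I would assume that every variety of dimension strictly less than $\dim X$ admits a cubical hyperresolution, and more generally — since the induction really needs to be run on cubical varieties — that every $n$-cubical variety all of whose components have dimension less than some bound admits a cubical hyperresolution; this stronger statement is what makes the recursion close up.

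The key construction goes as follows. First, invoke \cite[Th\'eor\`eme I.2.6]{GNPP} to obtain a resolution $f : \tilde{X}_{\sq} \to X$ of $X$ (viewed as a constant $1$-cubical, or just ordinary, variety); this exists by separating and resolving the irreducible components of $X$ and patching. Let $D \subset X$ be the discriminant of $f$ and $E := f^{-1}(D)_{\mathrm{red}}$ its preimage, giving a resolution square
\[
\xymatrix{
E \ar[r] \ar[d] & \tilde{X}_{\sq} \ar[d]^f \\
D \ar[r] & X.
}
\]
By the Lemma quoted from \cite[Lemma 5.20]{PeSt}, the $2$-cubical (or in general $(n+2)$-cubical) variety $Z_{\sq}$ defined by this square is of cohomological descent. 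Now $\tilde{X}_{\sq}$ is already smooth, but $D$ and $E$ are varieties of dimension strictly smaller than $\dim X$ — the crucial point, since $D$ is a proper closed subvariety and $E$ maps properly onto it with the map being an isomorphism outside a lower-dimensional set, forcing $\dim E < \dim X$ as well (here one uses that $f$ is a proper modification, so generically an isomorphism). By the inductive hypothesis, $D$ and $E$ admit cubical hyperresolutions $D_{\sq}$ and $E_{\sq}$, and by functoriality of the construction in \cite{GNPP} one can choose them compatibly with the map $E \to D$, so that $E_{\sq} \to D_{\sq}$ is a morphism of cubical varieties.

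The final step is to assemble these pieces. One replaces each vertex of the cubical diagram above by its chosen hyperresolution — $\tilde{X}_{\sq}$ stays as it is, $D$ is replaced by $D_{\sq}$, $E$ by $E_{\sq}$ — and takes an appropriate iterated cone / total complex of cubical varieties to produce a single $(n+1)$-cubical variety $X_{\sq}$ augmented to $X$. That all the terms $X_I$ for $I \neq \emptyset$ are smooth is immediate from the construction, since they come from the smooth $\tilde{X}_{\sq}$ or from the hyperresolutions $D_{\sq}, E_{\sq}$ whose nonempty terms are smooth by definition. That the augmentation is of cohomological descent follows by combining the cohomological descent of $Z_{\sq}$ (from \cite[Lemma 5.20]{PeSt}) with that of $D_{\sq} \to D$ and $E_{\sq} \to E$, using the standard fact that cohomological descent is stable under this kind of composition/substitution of cubical objects (a spectral sequence or $\mathrm{Cone}$ argument via \eqref{eq:cono}).

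The main obstacle, and the point that requires genuine care rather than routine bookkeeping, is the compatibility and functoriality of all the choices: one must run the induction on cubical varieties (not just on ordinary varieties) so that at each stage the hyperresolutions of $D$, $E$ and the map between them fit into a single coherent cubical diagram, and one must verify that the "substitution" operation on cubical varieties both preserves cohomological descent and produces something whose underlying index category is again a cube of the expected dimension $n+1$. Bounding this dimension — keeping track of how the cube grows at each recursive step — is the combinatorial heart of the argument; the geometric input (existence of resolutions, the dimension drop $\dim D, \dim E < \dim X$, and cohomological descent of a discriminant square) is entirely supplied by the results already cited.
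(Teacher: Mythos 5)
Your proposal is correct and is essentially the paper's own argument, which is likewise only a sketch deferring to \cite[Th\'eor\`eme I.2.15]{GNPP} and \cite[Theorem 5.26]{PeSt}: resolve, form the resolution/discriminant square (of cohomological descent by \cite[Lemma 5.20]{PeSt}), and recurse on the strictly lower-dimensional cubical variety $E\rightarrow D$, with termination guaranteed by the dimension drop. The assembly step you rightly flag as the delicate point --- substituting the chosen hyperresolutions into the square while preserving both the cubical shape and cohomological descent --- is precisely the reduction machinery of \cite{GNPP} that the paper also leaves implicit in its ``repeat the previous step enough times''.
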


\begin{proof}
A full proof can be found in \cite[Th\'eor\`eme I.2.15]{GNPP} or in \cite[Thereom 5.26]{PeSt}; here we are only interested in sketching how such a cubical hyperresolution can be constructed.

\begin{itemize}
\item Take a resolution $\pi:\tilde{X}\rightarrow X$ of $X$ and consider the $2$-cubical variety $X^{(1)}_{\sq}$ given by the associated resolution square:

\begin{equation*}
X^{(1)}_{\emptyset}:=X,\hspace{0.5cm}X^{(1)}_{\{0\}}:=\tilde{X},\hspace{0.5cm}X^{(1)}_{\{1\}}:=D,\hspace{0.5cm}X^{(1)}_{\{0,1\}}:=\pi_1^{-1}(D).
\end{equation*}
$X^{(1)}_{\sq}$ can be seen as a morphism of $1$-cubical varieties $f^{(1)}:Y_{\sq}^{(1)}\rightarrow Z_{\sq}^{(1)}$, with $Z_I$ smooth for $I\neq\emptyset$. 
\item Consider a resolution $\pi_2:\tilde{Y}_{\sq}^{(1)}\rightarrow Y_{\sq}^{(1)}$ and the corresponding resolution square; we obtain the diagram

\[\xymatrixcolsep{1.5cm}\xymatrixrowsep{1.5cm}\xymatrix{
E_{\square}^{(1)} \ar[r] \ar[d] & \tilde{Y}_{\square}^{(1)} \ar[d]^{\pi_2} \ar[rd]^{f^{(1)}\circ\pi_2} &\\
D_{\square}^{(1)} \ar[r]        & Y_{\square}^{(1)} \ar[r]^{f^{(1)}} & Z_{\square}^{(1)}.}
\]
The outer commutative square of $1$-cubical varieties can be considered as a $3$-cubical variety $X_{\square}^{(2)}$ i.e. as a morphism of $2$-cubical varieties $f^{(2)}:Y_{\sq}^{(2)}\rightarrow Z_{\sq}^{(2)}$, with $Z_I$ smooth for $I\neq\emptyset$.
\item Repeat the previous step enough times.
\end{itemize}
\end{proof}

Observe that if we take for $\C$ the category of $n$-cubical varieties and consider $X_{\sq}\in\C$, we can still apply the construction of Theorem \ref{thm:iperrescub} to $X_{\sq}$: at each step we obtain an $m$-cubical variety whose entries are $n$-cubical varieties. More precisely, Theorem \ref{thm:iperrescub} implies the following:

\begin{thm}\label{thm:iperrescubcub}
Any $n$-cubical variety $X_{\sq}$ admits a hyperresolution by an $m$-cubical variety $Y_{\sq}$ whose entries are $n$-cubical varieties.
\end{thm}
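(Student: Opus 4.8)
As observed just before the statement, the idea is simply to run the construction from the proof of Theorem \ref{thm:iperrescub} inside the category $\C_n$ of $n$-cubical varieties instead of inside the category of varieties (which is the case $n=0$). That construction is formal: it only uses that objects admit resolutions, that resolution and discriminant squares make sense, and that the cubical object attached to a discriminant square of a proper modification is of cohomological descent. So the whole task reduces to checking that $\C_n$ carries this structure and then transcribing the argument.

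First I would fix the relevant dictionary. By \cite[Th\'eor\`eme I.2.6]{GNPP} every cubical variety admits a resolution, and -- as recalled above -- proper modifications, discriminants and resolution/discriminant squares all make sense for morphisms of $n$-cubical varieties; call an $n$-cubical variety \emph{smooth} if each of its entries is a smooth variety, so that a resolution of an $n$-cubical variety is exactly a proper modification by a smooth one. Next, an $m$-cubical object of $\C_n$ is the same as an $(n+m)$-cubical variety: a contravariant functor $\sq_m\to\C_n$ is a contravariant functor from $\sq_m\times\sq_n$ to varieties, and $\sq_m\times\sq_n\cong\sq_{n+m}$ by identifying a subset of $[n+m-1]$ with the pair of its intersections with $\{0,\dots,m-1\}$ and $\{m,\dots,n+m-1\}$. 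Under this identification the associated semisimplicial object (point 3 of Remark \ref{rmk:cubobj}), augmentations and smoothness of the entries translate into the homonymous notions for the unravelled object, and -- after extending the machinery of Section 1 from semisimplicial spaces to semisimplicial objects of $n$-cubical spaces in the evident way (levelwise Godement resolutions, total complex of a multicomplex) -- so does cohomological descent. In particular one obtains the analog of \cite[Lemma 5.20]{PeSt}: the cubical object of $\C_n$ defined by a discriminant square of a proper modification of a cubical object of $\C_n$ is of cohomological descent; this follows by transcribing the proof of \cite[Lemma 5.20]{PeSt}, or by reducing to that lemma via the identification above.

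With the dictionary in place I would repeat the induction of Theorem \ref{thm:iperrescub} inside $\C_n$: take a resolution $\pi:\tilde{X}_{\sq}\to X_{\sq}$, form its resolution square, view it as a morphism $f^{(1)}:Y^{(1)}_{\sq}\to Z^{(1)}_{\sq}$ of $1$-cubical objects of $\C_n$ with $Z^{(1)}_I$ smooth for $I\neq\emptyset$, resolve $Y^{(1)}_{\sq}$, pass to the resulting discriminant square, and iterate. At each stage one glues in the cubical object defined by a discriminant square of a proper modification of a cubical object of $\C_n$, which is of cohomological descent by the analog of \cite[Lemma 5.20]{PeSt}, so the object being built stays of cohomological descent exactly as in the proof of Theorem \ref{thm:iperrescub}. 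The process stops after finitely many steps because the dimension of the cubical object still to be resolved strictly decreases at each stage, as in the classical case, and the output is an $m$-cubical object $Y_{\sq}$ of $\C_n$ whose entries off $\emptyset$ are smooth $n$-cubical varieties, augmented to $X_{\sq}$ and of cohomological descent -- i.e. a hyperresolution of $X_{\sq}$ by an $m$-cubical variety whose entries are $n$-cubical varieties.

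I expect the genuine work to be entirely bookkeeping: pinning down the definitions of \emph{smooth}, \emph{augmented} and \emph{of cohomological descent} for cubical and semisimplicial objects of $\C_n$ so that they agree on the nose with the corresponding notions for the $\sq_m\times\sq_n\cong\sq_{n+m}$ unravelling, together with the matching compatibilities for ``proper modification'', ``discriminant'' and ``resolution square''; the one substantive ingredient is the analog of \cite[Lemma 5.20]{PeSt}. Once these are settled, the construction and its termination are a word-for-word repetition of the proof of Theorem \ref{thm:iperrescub}.
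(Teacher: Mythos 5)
Your proposal is correct and follows essentially the same route as the paper, which simply observes that the construction of Theorem \ref{thm:iperrescub} can be run with $\C$ taken to be the category of $n$-cubical varieties, using that resolutions, discriminant squares and the cohomological-descent lemma of \cite[Lemma 5.20]{PeSt} are already available for cubical varieties. Your extra bookkeeping (the identification $\sq_m\times\sq_n\cong\sq_{n+m}$, termination by dimension) just makes explicit what the paper leaves implicit.
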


\begin{rmk}\label{rmk:iperrescubcub}
Assume that $X_{\sq}=\{X_{\emptyset},X_{\{0\}},X_{\{1\}},X_{\{01\}}\}$ is a $2$-cubical variety and $Y_{\sq}$ is an $m$-cubical hyperresolution of $X_{\sq}$, then $Y_{\sq}$ can be thought of as a $2$-cubical variety $Y'_{\sq}=\{Y'_{\emptyset},Y'_{\{0\}},Y'_{\{1\}},Y'_{\{01\}}\}$ of $(m-2)$-cubical varieties; by construction, for any $I\in\sq_2$ we have that $Y'_I$ is an $(m-2)$-cubical hyperresolution of $X_I$.
\end{rmk}

In \cite{GNPP} cubical hyperresolutions were used to define a cohomology theory for possibly singular algebraic varieties; namely:

\begin{defn}\label{defn:DeRhamComplex}
Let $X$ be a separated scheme of finite type over a field $\KK$ of characteristic zero, and let $\eps:X_{\sq}\rightarrow X$ be an $(n+1)$-cubical hyperresolution of $X$ together with its natural augmentation; the \textit{de Rham complex} and $k$-th algebraic de Rham cohomology group of $X$ are defined as

\begin{equation}
DR^{\bu}_X:=R\eps_*\Omega^{\bu}_{X_{\bu}}\hspace{1cm}H_{DR}^k(X):=\mathbb{H}^k(X,DR^{\bu}_X).
\end{equation}
If $V\subset X$ is a closed subset, the $k$-th algebraic de Rham cohomology group of $X$ with supports in $V$ is defined as

\begin{equation}
H_{DR,V}^k(X):=\mathbb{H}^k(V,R\Gamma_V DR^{\bu}_X).
\end{equation}
In both cases, $\eps:X_{\bu}\rightarrow X$ is the augmented $n$-semisimplicial resolution of $X$ associated to $X_{\sq}$.
\end{defn}

This cohomology theory coincides with the one developed by Hartshorne in \cite{H1} in case of an embeddable scheme $X$ over $\CC$, since the cohomology groups are defined as the hypercohomology of isomorphic complexes (see \cite[Th\'eor\`eme III.1.3]{GNPP}).

\begin{rmk}
The definitions of hyperresolution and de Rham complex $DR^{\bullet}_X$ given in \cite{GNPP} are actually different from the ones we presented here; if we denote by $\C$ the category of separated schemes of finite type over a field $\KK$ of characteristic zero, we have:

\begin{defn}\label{defn:hyperrescubGNPP}
\cite[D\'efinition I.3.2]{GNPP} If $X_{\sq}$ is an $n$-cubical hyperresolution of $X\in\C$, with its natural augmentation $X_{\sq}\rightarrow X$, then $X^*_{\sq}$ has a natural augmentation $\eps:X^*_{\sq}\rightarrow X$ to $X$ too. The latter is an \textit{$n$-cubical hyperresolution} of $X$.
\end{defn}

\begin{defn}\label{defn:DeRhamComplexBis}
\cite[D\'efinition III.1.10, Proposition III.1.12]{GNPP} If $X\in\C$ and $X^*_{\sq}$ is an $(n+1)$-cubical hyperresolution of $X$ with its natural augmentation $\eps:X^*_{\sq}\rightarrow X$, the \textit{de Rham complex} of $X$ is

\begin{equation*}
DR^{\bu}_X:=R\eps_*\Omega^{\sq}_{X^*_{\sq}}.
\end{equation*}
\end{defn}

Although different, the two definitions of the de Rham complex are equivalent. Indeed, pick $X\in\C$, let $X_{\sq}$ be an $(n+1)$-cubical hyperresolution of $X$ with its natural augmentation $\eps:X_{\sq}\rightarrow X$, and let $\eps:X_{\bu}\rightarrow X$ be the augmented $n$-semisimplicial resolution associated to it. Let $X^*_{\sq}$ be the augmented $(n+1)$-cubical hyperresolution of $X$ as in Definition \ref{defn:hyperrescubGNPP}, and denote by $\eps:X^*_{\sq}\rightarrow X$ its augmentation. In order to show that Definitions \ref{defn:DeRhamComplex} and \ref{defn:DeRhamComplexBis} are equivalent, we need to prove that

\begin{equation*}
R\eps_*\Omega^{\bu}_{X_{\bu}}\simeq R\eps_*\Omega^{\sq}_{X^*_{\sq}}
\end{equation*}
But this is a consequence of the construction we presented in point 3. of Remark \ref{rmk:cubobj}: indeed, that construction does not involve the entry $X_{\emptyset}$ of an $(n+1)$-cubical $\C$-object, hence all the entries of $X^*_{\sq}$ can be found in $X_{\bu}$ too (`bundled together' by the coproducts); moreover, the augmentation from the entries of $X_{\bu}$ to $X$ are combinations of the augmentations from the entries of $X^*_{\sq}$ to $X$. 
\end{rmk}

\section{Proof of Theorem 2}

Let us write resolution squares for $X$ and $Y$:

\begin{equation*}
S(X):=
\xymatrixcolsep{1.5cm}\xymatrixrowsep{1.5cm}\xymatrix{
D_X \ar[r] \ar[d] & \tilde{X} \ar[d]\\
\Sigma_X \ar[r] & X}\hspace{1cm}
S(Y):=
\xymatrixcolsep{1.5cm}\xymatrixrowsep{1.5cm}\xymatrix{
D_Y \ar[r] \ar[d] & \tilde{Y} \ar[d]\\
\Sigma_Y \ar[r] & Y.}
\end{equation*}

Consider the $m$-cubical hyperresolutions $H(Y)_{\sq}$ of $S(Y)$ and $H(X)_{\sq}$ of $S(X)$ provided by hypothesis (i). $H(Y)_{\sq}$ can be rewritten as a $2$-cubical variety of $(m-2)$-cubical varieties which, by Remark \ref{rmk:iperrescubcub}, are actually $(m-2)$-cubical hyperresolutions of the corresponding entries of $S(Y)$: 

\begin{equation*}
H(Y)_{\sq}=
\xymatrixcolsep{1.5cm}\xymatrixrowsep{1.5cm}\xymatrix{
{D_Y}_{\sq} \ar[r]^f \ar[d]^a & \tilde{Y}_{\sq} \ar[d]^b\\
{\Sigma_Y}_{\sq} \ar[r]^g & Y_{\sq}.}
\end{equation*}
Being a hyperresolution, $H(Y)_{\sq}$ is in particular of cohomological descent: hence, if $\underline{\CC}_{H(Y)_{\sq}}$ denotes the constant sheaf on $H(Y)_{\sq}$ then $C^{\bu}(H(Y)_{\sq},\underline{\CC}_{H(Y)_{\sq}})$ is acyclic, and the same is true of $C^{\bu}(H(Y)_{\sq},\underline{\CC}_{H(Y)_{\sq}})[2]$; by \cite[Corollary 5.28]{PeSt} we deduce the existence of an isomorphism

\begin{align*}
C^{\bu}(Y_{\sq},\underline{\CC}_{Y_{\sq}})\xrightarrow{\simeq}\text{Cone}^{\bu}&[Rb_*C^{\bu}(\tilde{Y}_{\sq},\underline{\CC}_{\tilde{Y}_{\sq}})\oplus Rg_*C^{\bu}({\Sigma_Y}_{\sq},\underline{\CC}_{{\Sigma_Y}_{\sq}})\xrightarrow{(C(a^{\#}),C(b^{\#}))}\\
&\xrightarrow{(C(a^{\#}),C(b^{\#}))} R(g\circ a)_*C^{\bu}({D_Y}_{\sq},\underline{\CC}_{{D_Y}_{\sq}})][-1].
\end{align*}
If we shift by $-1$ the short exact sequence of the cone over the morphism $(C(a^{\#}),C(b^{\#}))$ we obtain

\begin{align*}
0&\rightarrow R(g\circ a)_*C^{\bu}({D_Y}_{\sq},\underline{\CC}_{{D_Y}_{\sq}})[-1]\rightarrow\text{Cone}^{\bu}[Rb_*C^{\bu}(\tilde{Y}_{\sq},\underline{\CC}_{\tilde{Y}_{\sq}})\oplus Rg_*C^{\bu}({\Sigma_Y}_{\sq},\underline{\CC}_{{\Sigma_Y}_{\sq}})\xrightarrow{(C(a^{\#}),C(b^{\#}))}\\
&\xrightarrow{(C(a^{\#}),C(b^{\#}))} R(g\circ a)_*C^{\bu}({D_Y}_{\sq},\underline{\CC}_{{D_Y}_{\sq}})][-1]\rightarrow Rb_*C^{\bu}(\tilde{Y}_{\sq},\underline{\CC}_{\tilde{Y}_{\sq}})\oplus Rg_*C^{\bu}({\Sigma_Y}_{\sq},\underline{\CC}_{{\Sigma_Y}_{\sq}})\rightarrow 0
\end{align*}
so using the isomorphism above we get the short exact sequence of objects in $D_+(Sh(Y))$

\begin{equation*}
0\rightarrow R(g\circ a)_*C^{\bu}({D_Y}_{\sq},\underline{\CC}_{{D_Y}_{\sq}})[-1]\rightarrow C^{\bu}(Y_{\sq},\underline{\CC}_{Y_{\sq}})\rightarrow Rb_*C^{\bu}(\tilde{Y}_{\sq},\underline{\CC}_{\tilde{Y}_{\sq}})\oplus Rg_*C^{\bu}({\Sigma_Y}_{\sq},\underline{\CC}_{{\Sigma_Y}_{\sq}})\rightarrow 0.
\end{equation*}
Now, since the $(m-2)$-cubical hyperresolution $\eps:Y_{\sq}\rightarrow Y$ is of cohomological descent $C^{\bu}(Y_{\sq},\underline{\CC}_{Y_{\sq}})$ is acyclic; hence, if we denote by $Y_{\bu}$ the $(m-3)$-semisimplicial space associated to $S_{\sq}$ we can write the following isomorphism in $D_+(Sh(Y))$:

\begin{equation*}
\underline{\CC}_{Y}\xrightarrow{\simeq} R\eps_*\underline{\CC}_{Y_{\bu}}.
\end{equation*}
Since all elements of the $(m-3)$-semisimplicial variety $Y_{\bu}$ are smooth, in $D_+(Sh(Y_{\bu}))$ we have an isomorphism $\underline{\CC}_{Y_{\bu}}\xrightarrow{\simeq}\Omega^{\bu}_{Y_{\bu}}$, so we can substitute $R\eps_*\underline{\CC}^{\bu}_{Y_{\bu}}$ with $R\eps_*\Omega^{\bu}_{Y_{\bu}}$; the same can of course be done with the other $(m-2)$-cubical hyperresolutions in $H(Y)_{\sq}$.

In this way we obtain a short exact sequence of objects in $D_+(Sh(Y))$

\begin{equation}\label{eq:SESforY}
0\rightarrow R(g\circ a)_*DR_{D_Y}^{\bu}[-1]\rightarrow DR_Y^{\bu}\rightarrow Rb_*DR_{\tilde{Y}}^{\bu}\oplus Rg_*DR_{{\Sigma_Y}}^{\bu}\rightarrow 0
\end{equation}
which yields the long exact sequence of algebraic de Rham cohomology groups

\begin{equation}\label{eq:LECSY}
\dots\rightarrow H_{DR}^{\bu}(\Sigma_Y)\oplus H_{DR}^{\bu}(\tilde{Y})\rightarrow H_{DR}^{\bu}(D_Y)\rightarrow H_{DR}^{\bu+1}(Y)\rightarrow\cdots.
\end{equation}
We want to apply a similar argument to $H(X)_{\sq}$. We rewrite it as 

\begin{equation*}
H(X)_{\sq}=
\xymatrixcolsep{1.5cm}\xymatrixrowsep{1.5cm}\xymatrix{
{D_X}_{\sq} \ar[r]^f \ar[d]^a & \tilde{X}_{\sq} \ar[d]^a\\
{\Sigma_X}_{\sq} \ar[r]^g & X_{\sq}}
\end{equation*}
where each entry is an $(m-2)$-cubical hyperresolution of the corresponding entry of $S(X)$. Hypothesis (i) implies in particular that each $(m-2)$-cubical hyperresolution in $H(Y)_{\sq}$ can be embedded into the corresponding $(m-2)$-cubical hyperresolution in $H(X)_{\sq}$ as a closed $m$-cubical subvariety; if we pass to semisimplicial objects, we deduce the existence of natural closed immersions 

\begin{equation*}
H(Y)_{\bu}\hookrightarrow H(X)_{\bu}\hspace{0.5cm}Y_{\bu}\hookrightarrow X_{\bu}\hspace{0.5cm}{\Sigma_Y}_{\bu}\hookrightarrow {\Sigma_X}_{\bu}\hspace{0.5cm}\tilde{Y}_{\bu}\hookrightarrow \tilde{X}_{\bu}\hspace{0.5cm}{D_Y}_{\bu}\hookrightarrow {D_X}_{\bu}
\end{equation*}
and of the corresponding restriction of sections functors, which we shall denote by $\Gamma$.

Now we apply the same argument as before to the complex of sheaves on $H(X)_{\sq}$ given by $R\Gamma_{H(Y)_{\sq}}\underline{\CC}_{H(X)_{\sq}}$.

\begin{rmk}
We have the following commutative diagram of functors:

\[\xymatrixcolsep{1.5cm}\xymatrixrowsep{1.5cm}\xymatrix{
Sh({D_X}_{\bu}) \ar[r]^{\Gamma_{{D_Y}_{\bu}}} \ar[d]^{\eps_*} & Sh({D_X}_{\bu}) \ar[d]^{\eps_*}\\
Sh(D_X) \ar[r]^{\Gamma_{D_Y}} & Sh(D_X).}
\]
From this we deduce the equality of the total derived functors $R(\eps_*\circ\Gamma_{{D_Y}_{\bu}})=R(\Gamma_{D_Y}\circ\eps_*)$. But pushforwards preserve injective objects, and the same holds for $\Gamma_{{D_Y}_{\bu}}$ because ${D_Y}_{\bu}$ is closed in ${D_X}_{\bu}$; since injective objects are adapted to any functor, we obtain isomorphisms

\begin{equation}\label{eq:comm1}
R\eps_*\circ R\Gamma_{{D_Y}_{\bu}}\simeq R(\eps_*\circ\Gamma_{{D_Y}_{\bu}})=R(\Gamma_{D_Y}\circ\eps_*)\simeq R\Gamma_{D_Y}\circ R\eps_*.
\end{equation}
This commutativity holds for all the restriction of sections functors previously listed.
\end{rmk}

The $(m-2)$-cubical hyperresolution $\eps:X_{\sq}\rightarrow X$ is of cohomological descent so in $D_+(Sh(X))$ we have an isomorphism $\underline{\CC}_{X}\xrightarrow{\simeq} R\eps_*\underline{\CC}_{X_{\bu}}$. Moreover all elements of the $(m-3)$-semisimplicial variety $X_{\bu}$ are smooth, so in $D_+(Sh(X_{\bu}))$ we also have an isomorphism $\underline{\CC}_{X_{\bu}}\xrightarrow{\simeq}\Omega^{\bu}_{X_{\bu}}$. If we combine these facts we obtain isomorphisms

\begin{equation*}
R\Gamma_Y\underline{\CC}_X\simeq R\Gamma_Y R\eps_*\underline{\CC}_{X_{\bu}}\simeq R\Gamma_Y R\eps_*\Omega_{X_{\bu}}^{\bu}=R\Gamma_Y DR^{\bu}_X
\end{equation*}
that have counterparts for all the $(m-2)$-cubical hyperresolution in $H(X)_{\square}$. Thus we obtain the short exact sequence of objects of $D_+(Sh(X))$

\begin{equation}\label{eq:SESforX}
0\rightarrow R(g\circ a)_*R\Gamma_{D_Y} DR_{D_X}^{\bu}[-1]\rightarrow\Gamma_Y DR_X^{\bu}\rightarrow Rb_*\Gamma_{\tilde{Y}}DR_{\tilde{X}}^{\bu}\oplus Rg_*\Gamma_{\Sigma_Y}DR_{\Sigma_X}^{\bu}\rightarrow 0
\end{equation}
which yields the long exact sequence of algebraic de Rham cohomology groups with supports

\begin{equation}\label{eq:LECSX}
\dots\rightarrow H_{DR,\Sigma_Y}^{\bu}(\Sigma_X)\oplus H_{DR,\tilde{Y}}^{\bu}(\tilde{X})\rightarrow H_{DR,D_Y}^{\bu}(D_X)\rightarrow H_{DR,Y}^{\bu+1}(X)\rightarrow\cdots.
\end{equation}
Consider now the $m_1$-cubical hyperresolutions ${D_Y}_{\sq}$ of $D_Y$ and ${D_X}_{\sq}$ of $D_X$ provided by hypothesis (ii); we want to use the following result:

\begin{lem}\label{lem:tracemaps}
\cite[Lemma II.3.1]{H1} Let $\KK$ be a field of characteristic zero. Let $f:X\rightarrow Y$ be either a smooth morphism or a closed immersion of smooth schemes of finite type over $\KK$. Let $Z$ be a closed subscheme of $X$ such that the induced map $f:Z\rightarrow Y$ is a closed immersion. Then the trace map gives an isomorphism of complexes in $D_+(Sh(Y))$

\begin{equation*}
\text{Tr}_f:f_*R\Gamma_Z\Omega_X^{\bu}[2n]\rightarrow R\Gamma_Z\Omega_Y^{\bu}
\end{equation*}
where $n=\dim(X)-\dim(Y)$ and $\Gamma_Z$ denotes the restriction of sections functor. 
\end{lem}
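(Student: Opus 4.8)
The plan is to prove the two cases of the statement — $f$ a closed immersion and $f$ a smooth morphism — separately, reducing everything first to a Zariski-local assertion and then reducing the smooth case to the closed-immersion case. Since both sides of the desired isomorphism are complexes of sheaves on $Y$ supported on the closed set $f(Z)$ and $\text{Tr}_f$ is compatible with restriction to opens, it suffices to establish the isomorphism after replacing $Y$ by an arbitrarily small affine open; here one uses that $f|_Z\colon Z\to Y$ is a closed immersion, so that for $y\in f(Z)$ with (unique) preimage $z\in Z$ one may shrink $Y$ around $y$ so that the part of $Z$ lying over $Y$ is contained in any prescribed neighbourhood of $z$ in $X$ (remove the image under $f|_Z$ of the complement of that neighbourhood, which is closed and misses $y$). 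I would also isolate at the outset the two formal properties of the de Rham trace map that make the reductions legitimate, namely its functoriality under composition (so that $\text{Tr}_{g\circ h}$ agrees with the composite of $\text{Tr}_g$ and $g^*\text{Tr}_h$ up to the evident shift) and its compatibility with the functors $R\Gamma_{(-)}$; both are part of the construction of $\text{Tr}$.

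For a closed immersion $f=i\colon X\hookrightarrow Y$ of smooth schemes of codimension $c=-n$, I would first reduce to the case $Z=X$. Since $Z\subset X$ there is a canonical factorisation $R\Gamma^Y_Z\simeq R\Gamma^X_Z\circ R\Gamma^Y_X$; moreover $f_*$ is exact and preserves injectives, so it commutes with $R\Gamma_{(-)}$, and the general case follows by applying $R\Gamma_Z$ to the purity isomorphism $f_*\Omega_X^\bullet[-2c]\xrightarrow{\ \sim\ }R\Gamma_X\Omega_Y^\bullet$. To prove purity, observe that $\Omega^p_Y$ is locally free and $X$ is regularly embedded of codimension $c$ in $Y$, so the local cohomology sheaves $\mathcal H^j_X(\Omega^p_Y)$ vanish for $j\neq c$; hence $R\Gamma_X\Omega_Y^\bullet$ is quasi-isomorphic to the complex $\bigl(\mathcal H^c_X(\Omega^p_Y)\bigr)_{p\ge 0}$ with the de Rham differential, placed in degrees $\geq c$. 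It then remains to identify this complex, together with the residue map along $X$, with $i_*\Omega^\bullet_X$ shifted by $c$; this is the core computation and is carried out in the model $X=V(x_1,\dots,x_c)\subset\mathbb A^N=Y$, where $\mathcal H^c_X(\mathcal O_Y)$ is the free $\mathcal O_X$-module on the monomials $(x_1^{a_1}\cdots x_c^{a_c})^{-1}$ with all $a_i\geq 1$ and the residue furnishes the quasi-isomorphism by a direct Koszul-type calculation — this is where $\operatorname{char}\KK=0$ enters. One finally checks that the quasi-isomorphism so produced is the one induced by $\text{Tr}_i$.

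For the smooth case, after the local reduction I would use that $f$ is Zariski-locally standard smooth, i.e. factors as $X\xrightarrow{\ g\ }\mathbb A^n_Y\xrightarrow{\ \mathrm{pr}\ }Y$ with $g$ étale. Since $Z\to Y$ is proper and $\mathrm{pr}$ separated, $g|_Z\colon Z\to\mathbb A^n_Y$ is proper, and being a monomorphism (its composite with $\mathrm{pr}$ is $f|_Z$) it is a closed immersion onto some $Z''\subset\mathbb A^n_Y$; shrinking $X$ further one may assume $Z=g^{-1}(Z'')$, so that on sheaves supported along $Z''$ the functor $g_*g^*$ is the identity, and $g$ being étale and flat, $g^*$ is compatible with $\Omega^\bullet_{(-)}$ and with local cohomology. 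The étale factor therefore drops out and the problem reduces to the projection $\mathrm{pr}\colon\mathbb A^n_Y\to Y$ with closed subscheme $Z''\hookrightarrow\mathbb A^n_Y$ mapping isomorphically onto its image $W\subset Y$. After a translation in the fibre directions (a coordinate change permissible locally) one may take $Z''$ to be the zero section over $W$, that is $Z''=(Y\times\{0\})\cap\mathrm{pr}^{-1}(W)$; writing $R\Gamma_{Z''}\simeq R\Gamma_{\mathrm{pr}^{-1}(W)}\circ R\Gamma_{Y\times\{0\}}$, the inner functor applied to $\Omega^\bullet_{\mathbb A^n_Y}$ is computed by the closed-immersion case already proved — the zero section $Y\hookrightarrow\mathbb A^n_Y$ has codimension $n$ — and yields $\Omega^\bullet_Y[-2n]$, after which $\mathrm{pr}_*\,R\Gamma_{\mathrm{pr}^{-1}(W)}$ returns $R\Gamma_W\Omega^\bullet_Y$; one checks these identifications realise $\text{Tr}_f$ via the functoriality recorded at the outset.

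I expect the main obstacle to be the bookkeeping with the trace map rather than any single computation: one must know that $\text{Tr}$ is well defined, functorial for composition, and compatible with the support functors $R\Gamma_{(-)}$ and with the Künneth-type decomposition of $\Omega^\bullet_{\mathbb A^n_Y}$, so that all the abstract isomorphisms produced by the devissage genuinely compute $\text{Tr}_f$. Besides this, the only non-formal ingredient is the explicit residue computation identifying $R\Gamma_X\Omega_Y^\bullet$ with $i_*\Omega_X^\bullet[-2c]$ in the model case, which, while classical, is the technical heart of the argument.
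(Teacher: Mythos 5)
This lemma is not proved in the paper at all: it is quoted verbatim from Hartshorne \cite[Lemma II.3.1]{H1} and used as a black box, so there is no internal argument to compare yours against. Your sketch is essentially a reconstruction of the standard (Hartshorne-style) proof of that result: localise on $Y$ using that $f|_Z$ is a closed immersion; for a closed immersion prove purity $f_*\Omega^{\bu}_X[2n]\simeq R\Gamma_X\Omega^{\bu}_Y$ via the concentration of the local cohomology sheaves $\mathcal{H}^j_X(\Omega^p_Y)$ in degree $j=-n$ together with the residue/Koszul computation in the model case (which is indeed where characteristic zero is used), then apply $R\Gamma_Z$; for a smooth morphism factor locally through $\mathbb{A}^n_Y$ by an \'etale map and move the graph of $Z\rightarrow Y$ to the zero section, reducing to the closed-immersion case. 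The outline is sound, and the genuinely delicate point is exactly the one you flag: checking that the abstract isomorphisms produced by the d\'evissage are the ones induced by $\text{Tr}_f$, which requires the functoriality of the trace under composition and its compatibility with the support functors, as set up in \cite{H2}. A complete write-up should also make explicit two small steps you use implicitly: shrinking $X$ around $Z$ is harmless because $R\Gamma_Z$ satisfies excision and sheaves supported on $Z$ have no higher direct images under a map that is a closed immersion on $Z$; and $Z$ is open and closed in $g^{-1}(Z'')$ because it is the image of a section of a separated \'etale morphism, which justifies the further shrinking that makes $Z=g^{-1}(Z'')$.
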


Fix any $I\in\sq_{m_1}^*$ and consider any irreducible component $V$ of ${D_X}_I$; two things can happen:

\begin{enumerate}[(I)]
\item To $V$ there corresponds a unique irreducible component $U$ of ${D_Y}_I$ that admits a closed immersion $i:U\hookrightarrow V$ of codimension $1$.
\item To $V$ there corresponds no irreducible component of ${D_Y}_I$.
\end{enumerate}

In the first case we can use Lemma \ref{lem:tracemaps} with $Y=V$ and $Z=X=U$ to deduce the isomorphism

\begin{equation*}
i_*\Omega_U\xrightarrow{\simeq} R\Gamma_{U}\Omega_V[2]\hspace{0.5cm}\text{in }D_+(Sh(V)).
\end{equation*}
In the second case we have a closed immersion $\emptyset\hookrightarrow V$ and using Lemma \ref{lem:tracemaps} we find an isomorphism between trivial complexes. If we repeat the same reasoning for all irreducible components of ${D_X}_I$ we obtain an isomorphism 

\begin{equation}
i_*\Omega_{{D_Y}_I}\xrightarrow{\simeq} R\Gamma_{{D_Y}_I}\Omega_{{D_X}_I}^{\bu}[2]\hspace{0.5cm}\text{in }D_+(Sh({D_X}_I)).
\end{equation}
where $i$ is the closed immersion ${D_Y}_I\hookrightarrow {D_X}_I$. If we do the same for all $I\in\sq^*_{m_1}$ and then switch to semisimplicial objects, we obtain the isomorphism

\begin{equation}\label{eq:isosheaves}
i_*\Omega_{{D_Y}_{\bu}}^{\bu}\xrightarrow{\simeq} R\Gamma_{{D_Y}_{\bu}}\Omega_{{D_X}_{\bu}}^{\bu}[2]\hspace{0.5cm}\text{in }D_+(Sh({D_X}_{\bu})).
\end{equation}
where $i$ is the closed immersion ${D_Y}_{\bu}\hookrightarrow{D_X}_{\bu}$.

\begin{rmk}
Denote by $j$ the closed immersion $D_Y\hookrightarrow D_X$ and by $\eps_Y$ and $\eps_X$ the natural augmentations ${D_Y}_{\bu}\rightarrow D_Y$ and ${D_X}_{\bu}\rightarrow D_X$; we have a commutative diagram

\[\xymatrixcolsep{1.5cm}\xymatrixrowsep{1.5cm}\xymatrix{
Sh({D_Y}_{\bu}) \ar[r]^{i_*} \ar[d]^{{\eps_Y}_*} & Sh({D_X}_{\bu}) \ar[d]^{{\eps_X}_*}\\
Sh(D_Y) \ar[r]^{j_*} & Sh(D_X).}
\]
From this we deduce the equality of the total derived functors $R(j_*\circ{\eps_Y}_*)=R({\eps_X}_*\circ i_*)$. Both $i_*$ and $j_*$ are exact, because they are pushforwards of closed immersions, so they coincide with their derived functors; moreover, all pushforwards preserve injective objects, which are adapted to any functor. We thus obtain an isomorphism

\begin{equation}\label{eq:comm2}
R{\eps_X}_*\circ i_*=R{\eps_X}_*\circ Ri_*\simeq R({\eps_X}_*\circ i_*)=R(j_*\circ {\eps_Y}_*)\simeq Rj_*\circ R{\eps_Y}_*=j_*\circ R{\eps_Y}_*
\end{equation}
as functors from $D_+(Sh({D_Y}_{\bu}))$ to $D_+(Sh(D_X))$. 
\end{rmk}

If we apply to the sides of (\ref{eq:isosheaves}) the corresponding $R\eps_*$ and use (\ref{eq:comm2}) on the left-hand side and (\ref{eq:comm1}) on the right-hand side, we obtain an isomorphism between de Rham complexes $j_*DR^{\bu}_{D_Y}\xrightarrow\simeq R\Gamma_{D_Y}DR^{\bu}_{D_X}[2]$ in $D_+(Sh(D_X))$ and so an isomorphism between de Rham cohomology groups. Since we can repeat the previous reasoning for the $m_2$-cubical hyperresolutions of $\Sigma_Y$ and $\Sigma_X$ provided by hypothesis (ii), we obtain isomorphisms

\begin{align}\label{eq:morfismibelli}
\begin{split}
&H_{DR}^{\bu}(D_Y)\xrightarrow{\simeq} H_{DR,D_Y}^{\bu+2}(D_X)\\
&H_{DR}^{\bu}(\Sigma_Y)\xrightarrow{\simeq} H_{DR,\Sigma_Y}^{\bu+2c}(\Sigma_X).
\end{split}
\end{align}
These isomorphisms will allow us to relate the long exact sequences (\ref{eq:LECSY}) and (\ref{eq:LECSX}).

\paragraph{$\underline{c=1}$} For $k\geq 1$ we have the following diagram

\begin{equation}\label{eq:diag1}
\begin{tikzpicture}
  \matrix (m) [matrix of math nodes,row sep=3em,column sep=4em,minimum width=2em]
  {
   H_{DR}^{k-1}(\tilde{Y})\oplus H_{DR}^{k-1}(\Sigma_Y) & H_{DR,\tilde{Y}}^{k+1}(\tilde{X})\oplus H_{DR,\Sigma_Y}^{k+1}(\Sigma_X)\\
	 H_{DR}^{k-1}(D_Y) & H_{DR,DY}^{k+1}(D_X)\\
	 H_{DR}^k(Y) & H_{DR,Y}^{k+2}(X)\\
	 H_{DR}^k(\tilde{Y})\oplus H_{DR}^k(\Sigma_Y) & H_{DR,\tilde{Y}}^{k+2}(\tilde{X})\oplus H_{DR,\Sigma_Y}^{k+2}(\Sigma_X)\\
   H_{DR}^k(D_Y) & H_{DR,D_Y}^{k+2}(D_X)\\};
  \path[-stealth]
    (m-1-1) edge node [left] {$\al$} (m-2-1)
		(m-1-1) edge node [above] {$\simeq$} (m-1-2)
		(m-2-1) edge node [left] {$\beta$} (m-3-1)
		(m-2-1) edge node [above] {$\simeq$} (m-2-2)
		(m-3-1) edge node [left] {$\delta$} (m-4-1)
		(m-4-1) edge node [left] {$\sigma$} (m-5-1)
		(m-4-1) edge node [above] {$\simeq$} (m-4-2)
		(m-5-1) edge node [above] {$\simeq$} (m-5-2)
		(m-1-2) edge node [right] {$\al'$} (m-2-2)
    (m-2-2) edge node [right] {$\beta'$} (m-3-2)
		(m-3-2) edge node [right] {$\delta'$} (m-4-2)
		(m-4-2) edge node [right] {$\sigma'$} (m-5-2);
\end{tikzpicture}
\end{equation}
The two squares are commutative. Indeed, the trace maps are functorial by construction (see \cite[Chapter VI, Section 4.2]{H2}) so the same holds for the isomorphisms of cohomology groups they yield, which are the horizontal maps of this diagram; as the vertical maps are obtained from the hyperresolutions of $S(Y)$ and $S(X)$ they are functorial too, and this gives the commutativity of the squares. From this we deduce that $Ker(\delta)\simeq Ker(\delta')$ so we find an isomorphism $\theta_k:H_{DR}^k(Y)\xrightarrow{\simeq} H_{DR,Y}^{k+2}(X)$; moreover, we can choose the $\theta_k$ in such a way that \textit{all} the squares of (\ref{eq:diag1}) commute. 

Now, $X\setminus Y$ is affine so $H_{DR}^j(X\setminus Y)=0$ for $j\geq\dim(X)+1$ by \cite[Corollaire III.3.11(i)]{GNPP}; writing down the long exact sequence of algebraic de Rham cohomology groups associated to the pair $(X,X\setminus Y)$, we find that the morphism $H_{DR,Y}^{k+2}(X)\rightarrow H_{DR}^{k+2}(X)$ is surjective for $k+2=\dim(X)+1$ and an isomorphism for $k+2>\dim(X)+1$. If we pre-compose these morphisms with the corresponding $\theta_k$ we obtain morphisms $H_{DR}^k(Y)\rightarrow H_{DR}^{k+2}(X)$ that are surjective for $k=\dim(X)-1=\dim(Y)$ and isomorphisms for $k>\dim(Y)$; using the comparison theorem \cite[Theorem IV.1.1]{H1} we can conclude that these morphisms exist for singular cohomology too.

\paragraph{$\underline{c=0}$} In this case in order to have a diagram like (\ref{eq:diag1}) we need $k>2\dim(\Sigma_X)+1$, but this is the only difference with the previous case.

\section{Alexander polynomial and line arrangements}

By the works of Milnor \cite{Mi} and L\^e \cite{Le} we know in particular that if $f\in\CC[x_0,\ldots,x_n]$ is a homogeneous polynomial then the map $f:\CC^{n+1}\setminus f^{-1}(0)\rightarrow\CC^*$ is a smooth locally trivial fibration; its generic fibre, usually denoted by $F$, is called \textit{Milnor fibre}. To $F$ we can associate the geometric monodromy operator $h:F\rightarrow F$ and the induced algebraic monodromy operators $T_i:H^i(F,\CC)\rightarrow H^i(F,\CC)$. 

\begin{defn}
Let $C=V(f)\subset\PP^2$ be a reduced curve. The \textit{Alexander polynomial} of $C$ is the characteristic polynomial of $T_1$, and is denoted by $\Delta_C$.
\end{defn}

If $f$ has degree $d$ then $h$ is given by $\underline{x}\mapsto\eta_d\cdot\underline{x}$, where $\zeta_d$ is a primitive $d$-th root of unity; hence both $h$ and $T$ have order $d$, so $T$ is diagonalisable with roots of unity of order $d$ as eigenvalues. Moreover, the Milnor fibre of $C$ is a $d$-fold cover of $U:=\PP^2\setminus C$ and the geometric monodromy $h$ is a generator of the group of deck transformations of $F$; this implies that

\begin{equation}
\Delta_C(t)=(t-1)^{r-1}\prod_{1<k|d}\Phi_k(t)=(t-1)^{r-1}q(t).
\end{equation}
where $r$ is the number of irreducible components of $C$. We call $q(t)$ the non-trivial part of $\Delta_C(t)$, and say that $\Delta_C(t)$ is \textit{non-trivial} if $q(t)\neq 1$.

The most general tool for computing $\Delta_C$ is a formula by Libgober (see \cite{L1}) that involves type and relative position of the singularities of $C$; one can use it to verify one of the striking features of the Alexander polynomial: that it is rather hard to find curves for which it is non-trivial. This has led researchers to look for classes of curves for which the non-triviality of $\Delta_C$ could be detected by easier means, without the need to directly compute the whole polynomial. Line arrangements, which we will denote by $\Ab$, are one of these classes. The reason for this choice is two-fold: on the one hand, they are curves with the simplest possible singularities; on the other hand, one may try and take advantage of the combinatorial nature of such objects, encoded in their intersection semilattices $L(\Ab)$. 

Indeed, over the course of the years many examples and results have shown that the non-triviality of $\Delta_{\Ab}$ might be detected simply by looking at $L(\Ab)$; in order to present them properly, we need to introduce the notion of multinet \cite{FY,PS}:

\begin{defn}\label{defn:multinets}
Let $\Ab$ be a line arrangement, $\N$ denote a partition of $\Ab$ into $k\geq 3$ subsets $\Ab_1,\ldots,\Ab_k$, $m$ be a  `multiplicity function' $m:\Ab\rightarrow\NN$ and $\X$ be a subset of the multiple points of $\Ab$; consider moreover the following conditions:

\begin{enumerate}[(i)]
\item There exists $d\in\NN$ such that $\sum_{l\in\Ab_i}m(l)=d$ for all $i=1,\ldots,k$.
\item For any $l\in\Ab_i$ and $l'\in\Ab_j$ with $i\neq j$ we have $l\cap l'\in\X$.
\item For all $p\in\X$ the integer $n_p:=\sum_{l\in\Ab_i,p\in l}m(l)$ does not depend on $i$.
\item For all $i=1,\ldots,k$ and any $l,l'\in\Ab_i$, there is a sequence $l=l_0,\ldots,l'=l_r$ such that $l_{j-1}\cap l_j\notin\X$.
\end{enumerate}

The couple $(\N,\X)$ is called:

\begin{itemize}
\item a \textit{weak }$(k,d)$-\textit{multinet} if it satisfies (i)-(iii).
\item a $(k,d)$-\textit{multinet} if it satisfies (i)-(iv).
\item a \textit{reduced }$(k,d)$-\textit{multinet} if it satisfies (i)-(iv) and $m(l)=1$ for all $l\in\Ab$.
\item a $(k,d)$-\textit{net} if it satisfies (i)-(iv) and $n_p=1$ for all $p\in\X$; if $d=1$, the $(k,1)$-net is called a \textit{trivial }$k$-net.
\end{itemize}

We call $\Ab_1,\ldots,\Ab_k$ the \textit{classes} of $\N$, $\X$ its \textit{base locus} and $d$ its \textit{weight}. If $(\N,\X)$ is a weak $(k,d)$-multinet on $\Ab$ and $p$ is a multiple point of $\Ab$, we define the \textit{support of $p$ with respect to $\N$} as

\begin{equation*}
supp_{\N}(p):=\{\al\in\{1,\ldots,k\}|p\in l\text{ for some }l\in\Ab_{\al}\}.
\end{equation*}
\end{defn}

Observe that the notion of multinet is a mathematically precise formalisation of the notion of symmetry.

\begin{center}
\begin{tikzpicture}[shorten >=1pt,auto,node distance=2cm,
       thick,main node/.style={circle,draw,solid,fill=black,inner sep=0pt,minimum width=4pt}]
\draw[red,dotted] (0.75,0.5) -- (3.25,5.5);
\draw[green] (0.5,1) -- (5.5,1);
\draw[blue,dashed] (2.75,5.5) -- (5.25,0.5);
\draw[green] (3,0.5) -- (3,5.5);
\draw[blue,dashed] (0.25,0.5) -- (4.5,3.33);
\draw[red,dotted] (1.5,3.33) -- (5.75,0.5);
\filldraw [black] (1,1) circle [radius=2pt];
\filldraw [black] (5,1) circle [radius=2pt];
\filldraw [black] (3,5) circle [radius=2pt];
\filldraw [black] (3,2.33) circle [radius=2pt];
\end{tikzpicture}

A $3$-net on the $A_3$ line arrangement.
\end{center}
We have the following result, which can be obtained as a consequence of \cite[Theorem 8.3]{PS} or combining \cite[Theorem 3.11]{FY} with \cite[Theorem 3.1(i)]{DP}:

\begin{thm}\label{thm:SuffCond}
If $\Ab$ admits a reduced multinet then its Alexander polynomial is non-trivial.
\end{thm}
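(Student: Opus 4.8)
The plan is to reduce Theorem~\ref{thm:SuffCond} to the two cited sources rather than to reprove the relationship between multinets and the Alexander polynomial from scratch. The statement asserts that a reduced multinet on $\Ab$ forces $q(t)\neq 1$; equivalently, it forces the algebraic monodromy $T_1$ on $H^1(F,\CC)$ to have an eigenvalue other than $1$. I would first recall that the spectrum of $T_1$ is controlled by the characters $\rho\colon H_1(U,\ZZ)\to\CC^*$ with $\rho$ of order dividing $d=|\Ab|$, and that the non-triviality of $q(t)$ amounts to the non-vanishing of one of the twisted cohomology groups $H^1(U,\CC_\rho)$ for such a non-trivial $\rho$; this is the standard dictionary between Milnor fibre cohomology of a homogeneous polynomial and rank-one local systems on the complement, which underlies both \cite[Theorem 8.3]{PS} and \cite[Theorem 3.1(i)]{DP}.

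From there the argument splits according to which reference one wishes to follow. Following \cite{PS}: a reduced multinet $(\N,\X)$ with $k$ classes produces, by \cite[Theorem 8.3]{PS} together with the resonance-to-characteristic-variety correspondence, a positive-dimensional component of the first characteristic variety $\mathcal V_1(\Ab)$ passing through torsion points of order dividing $\gcd$ of the class-sizes (in particular of order $k$ when the multinet is reduced), and such torsion points give non-trivial $\rho$ with $H^1(U,\CC_\rho)\neq 0$; since $k\geq 3$ these $\rho$ are genuinely non-trivial, so $\Phi_j(t)\mid q(t)$ for some $j>1$. Alternatively, following \cite{FY,DP}: \cite[Theorem 3.11]{FY} shows that a reduced multinet on $\Ab$ yields a non-vanishing class in the Orlik--Solomon resonance variety $\mathcal R_1(\Ab)$ (the pencil associated to the multinet contributes a component of dimension $k-2\geq 1$), and then \cite[Theorem 3.1(i)]{DP} transports this resonance into the characteristic variety, again producing a non-trivial eigenvalue of $T_1$. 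Either route concludes that $q(t)\neq 1$.

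Concretely, the steps I would carry out are: (1) state the eigenvalue-of-monodromy $\Leftrightarrow$ twisted-cohomology dictionary and reduce the claim to exhibiting a non-trivial rank-one local system on $U=\PP^2\setminus\Ab$ with non-vanishing $H^1$; (2) invoke \cite[Theorem 3.11]{FY} to pass from the reduced multinet to a nonzero element of $\mathcal R_1(\Ab)$, being careful that the multinet condition~(iv) in Definition~\ref{defn:multinets} is exactly what guarantees the associated component of the resonance variety is essential and of positive dimension; (3) apply \cite[Theorem 3.1(i)]{DP} (or directly \cite[Theorem 8.3]{PS}) to obtain from this a non-trivial torsion character $\rho$ on the corresponding component of $\mathcal V_1(\Ab)$ with $H^1(U,\CC_\rho)\neq 0$; (4) conclude that the characteristic polynomial of $T_1$ has a root that is a primitive $j$-th root of unity for some $j>1$ dividing $d$, hence $q(t)\neq 1$.

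The main obstacle—really the only subtle point, since the heavy lifting is in the cited theorems—is bookkeeping the precise hypotheses under which the resonance-to-characteristic-variety translation preserves non-vanishing \emph{at a torsion point of the right order}. The subtlety is that a nonzero resonance class a priori only guarantees a positive-dimensional component of $\mathcal V_1$, and one must know this component actually contains a unitary torsion character compatible with the Milnor-fibre covering (order dividing $d=|\Ab|$); this is where the specific form of the multinet pencil, and the fact that it is \emph{reduced} so that all multiplicities are $1$ and the relevant covering is the total degree-$d$ one, is used. I would make sure to cite the exact place in \cite{PS} or \cite{DP} where this compatibility is established, rather than re-deriving it, so that the proof remains the short deduction it is meant to be.
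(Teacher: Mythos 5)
Your proposal is correct and follows essentially the same route as the paper, which gives no independent argument for this statement but simply records it as a consequence of \cite[Theorem 8.3]{PS}, or of \cite[Theorem 3.11]{FY} combined with \cite[Theorem 3.1(i)]{DP}. Your write-up just makes explicit the multinet-to-resonance-to-characteristic-variety dictionary that those citations encapsulate, so there is nothing to compare beyond the level of detail.
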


This sufficient condition for the non-triviality of the Alexander polynomial of a line arrangement $\Ab$ is not necessary; however, all line arrangements with non-trivial Alexander polynomial known so far admit at least a $(k,d)$-multinet (and if the arrangement is non-central we have $k=3$ or $k=4$ only). This suggests that multinets somehow control the non-triviality of $\Delta_{\Ab}$; indeed, for some classes of line arrangements such a dependence has been established (\cite[Theorem 1.6, Theorem 1.2]{PS}):

\begin{thm}
Let $\Ab$ be a line arrangement with only double and triple points, then $\Delta_{\Ab}(t)=(t-1)^{|\A|-1}\Phi_3(t)^{\beta_3(\Ab)}(t)$ with $0\leq \beta_3(\Ab)\leq 2$ and $\beta_3(\Ab)\neq 0$ if and only if $\Ab$ admits a $3$-net.
\end{thm}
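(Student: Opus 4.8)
The plan is to first fix the shape of $\Delta_{\Ab}(t)$, then identify its exponent with the combinatorial invariant $\beta_3(\Ab)$, and finally derive the bound and the $3$-net characterisation from the combinatorics of resonance varieties. \textbf{Step 1 (shape of $\Delta_{\Ab}$).} I would use Libgober's divisibility theorem for plane curves: $\Delta_{\Ab}(t)$ divides $(t-1)^{|\Ab|-1}$ times the product, over the singular points $p$ of $\Ab$, of the local Alexander polynomials $\Delta_p(t)$ (characteristic polynomials of the local Milnor monodromies, suitably normalised). At a node the local Milnor fibre is $\simeq\CC^*$ with trivial monodromy, so $\Delta_p(t)=t-1$ and a node contributes nothing to the non-trivial part $q(t)$. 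At an ordinary triple point the local Milnor fibre is a connected $3$-fold cyclic cover of $\PP^1$ minus three points, with $b_1=4$; its monodromy-invariant subspace is $H^1$ of the base, of dimension $2$, so the complementary $2$-dimensional space carries exactly the two primitive cube roots of unity and $\Delta_p(t)$ contributes a single factor $\Phi_3(t)$. Hence $q(t)$ divides a power of $\Phi_3(t)$, i.e. $\Delta_{\Ab}(t)=(t-1)^{|\Ab|-1}\Phi_3(t)^{e_3}$ with $e_3=\dim_{\CC}H^1(F,\CC)_{\zeta}$ for $\zeta$ a primitive cube root of unity.

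\textbf{Step 2 ($e_3=\beta_3(\Ab)$).} The eigenspace $H^1(F,\CC)_{\zeta}$ is canonically isomorphic to $H^1(U,\mathcal{L}_{\zeta})$, where $U=\PP^2\setminus\Ab$ and $\mathcal{L}_{\zeta}$ is the rank-one local system sending every meridian to $\zeta$ (the Milnor fibre being a cyclic cover of $U$). Since $\zeta$ has prime order $3$, this dimension is controlled by the Aomoto complex of the Orlik--Solomon algebra with $\mathbb{F}_3$-coefficients: the modular bound of Papadima--Suciu gives $e_3\leq\beta_3(\Ab)$, and for arrangements with only double and triple points I would argue that equality holds, using this hypothesis to exclude the standard obstructions (contributions of deeper local systems, translated components, $\mathbb{F}_3$-resonance away from the origin). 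This yields $\Delta_{\Ab}(t)=(t-1)^{|\Ab|-1}\Phi_3(t)^{\beta_3(\Ab)}$ with $\beta_3(\Ab)$ depending only on $L(\Ab)$.

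\textbf{Step 3 (bound and $3$-net characterisation).} For the implication ``$3$-net $\Rightarrow q(t)\neq 1$'': a $3$-net on $\Ab$ is a reduced multinet in the sense of Definition \ref{defn:multinets} (its base points are triple points, all line-multiplicities equal $1$, and (i)--(iv) hold), so Theorem \ref{thm:SuffCond} already gives $q(t)\neq 1$, whence $\beta_3(\Ab)=e_3\neq 0$. Conversely, a non-zero class in $H^1$ of the $\mathbb{F}_3$-Aomoto complex forces, by the resonance--multinet dictionary (Falk--Yuzvinsky, Pereira--Yuzvinsky, in the $\mathbb{F}_p$-refined form of Papadima--Suciu), a multinet on $\Ab$; since every base point of a $(k,d)$-multinet lies on at least $k$ lines, the double-and-triple-point hypothesis forces $k=3$, $n_p=1$ at every base point and weight-one multiplicities, i.e. a genuine $(3,d)$-net. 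Finally $\beta_3(\Ab)\leq 2$: a non-trivial $3$-net contributes a $2$-dimensional component of the degree-$1$ $\mathbb{F}_3$-resonance variety through the origin, and two essentially distinct such structures on the same arrangement would produce a point lying on four or more lines, which is excluded; thus this resonance variety has dimension at most $2$, i.e. $0\leq\beta_3(\Ab)\leq 2$.

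\textbf{Main obstacle.} The crux is Step 2 together with the ``$\Rightarrow$'' half of Step 3: in general one has only $e_3\leq\beta_3$, modular resonance can be strictly larger than complex resonance and can carry components not passing through the origin, and a non-zero resonance class need not a priori come from an honest net. The double-and-triple-point hypothesis must be used in an essential way to rule these pathologies out and to obtain the clean bound $\beta_3\leq 2$; by contrast the local Milnor-fibre computations and the appeal to Libgober's divisibility theorem in Step 1 are routine.
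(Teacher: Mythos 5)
First, a point of comparison: the paper does not prove this statement at all --- it is quoted verbatim from Papadima--Suciu (\cite[Theorems 1.2 and 1.6]{PS}) as background motivation for the conjecture, so there is no in-paper argument against which your route can be measured; any honest proof has to reproduce the content of \cite{PS}. Measured against that, your Step 1 is fine and standard (Libgober divisibility plus the local computation at nodes and ordinary triple points, where the eigenvalues $1,1,\zeta_3,\zeta_3^2$ on the $4$-dimensional $H^1$ of the local Milnor fibre do give a single $\Phi_3$ factor), and the implication ``$3$-net $\Rightarrow q\neq 1$'' via Theorem \ref{thm:SuffCond} is legitimate. But Steps 2 and 3 are where the theorem actually lives, and there you assert rather than prove: the equality $e_3=\beta_3$ (you say ``I would argue that equality holds, using this hypothesis to exclude the standard obstructions''), the implication from a nonzero $\mathbb{F}_3$-resonance class to an honest $3$-net, and the bound $\beta_3\leq 2$ are exactly the substance of \cite{PS}, obtained there through a delicate analysis of the mod-$3$ Aomoto complex and its relation to nets; citing that circle of results is circular, and no independent argument is supplied. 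As you yourself flag in the ``main obstacle'' paragraph, the modular bound only gives $e_3\leq\beta_3$ in general, so the proposal as written establishes the shape of $\Delta_{\Ab}$ but not the theorem.

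Moreover, the one concrete argument you do offer for $\beta_3\leq 2$ is incorrect. You claim that two essentially distinct $3$-net structures on the same arrangement would force a point lying on four or more lines. The Ceva arrangement $(x^3-y^3)(y^3-z^3)(x^3-z^3)=0$ (the dual Hesse configuration of $9$ lines and $12$ triple points) has only triple points, carries four pairwise distinct $3$-nets, and has $\beta_3=2$; so distinct nets coexist perfectly well under the double-and-triple-point hypothesis, and the bound $\beta_3\leq 2$ cannot be obtained by the exclusion argument you sketch. (Relatedly, the statement that a $3$-net contributes a $2$-dimensional component of the degree-one resonance variety is a fact about complex resonance; the invariant $\beta_3$ is the dimension of the degree-one cohomology of the $\mathbb{F}_3$-Aomoto complex at the diagonal class, and transferring dimension counts between the two settings is one of the genuinely hard points of \cite{PS}, not a formal step.) So the proposal correctly identifies the architecture of the known proof but leaves its core --- equality $e_3=\beta_3$, the modular-resonance-to-net implication, and the bound $\beta_3\leq 2$ --- unproved, and the substitute argument it offers for the last of these fails on a standard example.
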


This result, together with the many examples gathered throughout the years, led Papadima and Suciu to formulate the following conjecture:

\begin{conj}
The Alexander polynomial of a line arrangement $\Ab$ has the form

\begin{equation*}
\Delta_{\Ab}(t)=(t-1)^{|\Ab|-1}\Phi_3(t)^{\beta_3(\Ab)}[\Phi_2(t)\Phi_4(t)]^{\beta_2(\Ab)}
\end{equation*}
\end{conj}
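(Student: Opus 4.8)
The plan is to reduce the computation of $\Delta_{\Ab}$ to a family of twisted cohomology computations on the complement $U:=\PP^2\setminus\Ab$ and then to match each cyclotomic factor against a modular invariant. Since the geometric monodromy $h$ generates the deck group of the $d$-fold cover $F\to U$ with $d=|\Ab|$, the eigenspace of $T_1$ for the eigenvalue $\lambda=\zeta_d^{\,j}$ is canonically isomorphic to $H^1(U,\mathcal{L}_\lambda)$, where $\mathcal{L}_\lambda$ is the rank-one local system on $U$ with monodromy $\lambda$ around every line. Consequently the multiplicity of $\Phi_k$ in $\Delta_{\Ab}$ equals $\sum_\lambda\dim H^1(U,\mathcal{L}_\lambda)$, the sum running over the primitive $k$-th roots of unity, and the whole problem becomes the determination of these dimensions along the diagonal one-parameter family $\lambda\mapsto\mathcal{L}_\lambda$ inside the character torus $\mathrm{Hom}(\pi_1(U),\CC^*)$.

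First I would decide which roots of unity contribute. Integrality of $\Delta_{\Ab}$, it being the characteristic polynomial of the integral monodromy, forces conjugate eigenvalues to share the same multiplicity, so the contributions automatically assemble into full cyclotomic factors $\Phi_k$; it remains to detect for which $k$ the multiplicity is nonzero and to compute it. By Arapura's structure theorem the jump locus $\Sigma^1(U):=\{\rho:H^1(U,\mathcal{L}_\rho)\neq 0\}$ is a finite union of torsion-translated subtori, and by the theory of Falk and Yuzvinsky its essential positive-dimensional components correspond to the multinets carried by $\Ab$. Since multinets have at most four classes and four-class multinets are severely constrained, the positive-dimensional jumps along the diagonal occur only at characters of order $2$, $3$ or $4$, confining the expected cyclotomic factors to $\Phi_2$, $\Phi_3$ and $\Phi_4$ as in the displayed formula.

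Next I would bound, and then pin down, the multiplicities by reduction modulo a prime. The linearisation of $\Sigma^1(U)$ at a torsion character of order a power of $p$ is controlled by the cohomology of the Aomoto complex $\bigl(A^{\bullet}(\Ab)\otimes\mathbb{F}_p,\ a\wedge(-)\bigr)$ with $a=\sum_{l\in\Ab}e_l$, whose first Betti number is by definition $\beta_p(\Ab)$; upper-semicontinuity of cohomology under the specialisation $\CC\rightsquigarrow\mathbb{F}_p$ then yields $\dim H^1(U,\mathcal{L}_\lambda)\leq\beta_p(\Ab)$ whenever $\lambda$ has order $p^s$. The decisive structural feature is that the prime $p=2$ simultaneously governs the eigenvalues of order $2$ and of order $4$, which is exactly why $\Phi_2$ and $\Phi_4$ appear with the common exponent $\beta_2(\Ab)$, while $p=3$ governs the order-$3$ eigenvalues and supplies the exponent $\beta_3(\Ab)$ of $\Phi_3$. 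Matching these contributions, after checking sharpness of the bound along the multinet components, reproduces precisely the shape of the conjectured polynomial.

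The hard part will be the two points at which this conjecture remains open. First one must establish the vanishing $H^1(U,\mathcal{L}_\lambda)=0$ for every root of unity whose order is divisible by a prime $p\geq 5$ or by $8$ or $9$, so that no stray cyclotomic factor survives; the combinatorial restriction to three- and four-class multinets makes this plausible, but converting it into an unconditional vanishing requires controlling the \emph{translated} components of $\Sigma^1(U)$ that avoid the origin, for which no general technique is known. Second one must upgrade the inequalities $\dim H^1(U,\mathcal{L}_\lambda)\leq\beta_p(\Ab)$ to equalities, excluding the loss of multiplicity caused by unexpected torsion characters lying on those translated tori. Absent a new geometric input controlling the translated components---or a representation-theoretic refinement of the Aomoto complex that detects the exact jump at each torsion character---the strategy above delivers only the conjectured upper bound, together with the exact answer in the cases already settled, such as the multinet-free arrangements of Theorem \ref{thm:AP} and the arrangements with only double and triple points.
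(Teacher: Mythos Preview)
The statement you are attempting to prove is labelled \emph{Conjecture} in the paper, and indeed it is the open Papadima--Suciu conjecture: the paper does not prove it and contains no proof to compare your proposal against. The paper's contribution is Theorem~\ref{thm:AP}, which verifies the conjecture for the very special class of arrangements whose lines all pass through one of two fixed points; that proof proceeds by an entirely different route---Gysin morphisms for non-transversal hyperplane sections via cubical hyperresolutions, combined with a Thom--Sebastiani argument---and makes no use of characteristic varieties, multinets, or the Aomoto complex.

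Your proposal is a fair summary of the circle of ideas surrounding the conjecture (Arapura's theorem, the Falk--Yuzvinsky multinet description of essential components, the modular bound $\dim H^1(U,\mathcal{L}_\lambda)\le\beta_p(\Ab)$), and you are honest in identifying the two genuine obstructions: ruling out contributions from roots of unity of order divisible by a prime $\ge 5$ (or by $8$ or $9$), and upgrading the modular inequality to an equality. But precisely because you identify these as open, what you have written is not a proof but a research programme. In particular, the step ``the positive-dimensional jumps along the diagonal occur only at characters of order $2$, $3$ or $4$'' is not established: the multinet restriction constrains only the components through the origin, and you yourself note that translated components are uncontrolled. Likewise, ``checking sharpness of the bound along the multinet components'' is exactly the content of the conjecture and cannot be assumed. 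So there is no gap to name beyond the ones you have already named; the point is simply that the statement is not a theorem of the paper, and your outline does not close the gaps either.
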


The numbers $\beta_i(\Ab)$ are the modular Aomoto-Betti numbers of $\Ab$ (see \cite[Section 3]{PS}), and they only depend on $L(\Ab)$ and $i$. Recent results \cite{MPP, D2, DSt} show that this conjecture is valid for all complex reflection arrangements.

\begin{rmk}
The only known arrangement with $\beta_2\neq 0$ is the Hesse arrangement: it can be constructed considering the nine inflection points of an elliptic curve and taking all lines that contain exactly three such points. We obtain an arrangement with twelve lines and nine point of order four with $\beta_2=2$.
\end{rmk}

The rest of this section is devoted to collecting some sparse results we shall need in the following one.

\begin{lem}\label{lem:ThomIso}
Suppose $f(x_0,\ldots,x_n)$ has an isolated singularity at the origin and $g(y_0,\ldots,y_n)$ has an arbitrary singularity at the origin. Call $F$, $G$ and $F\oplus G$ the Milnor fibres of $f$, $g$ and $f+g$ respectively, and denote by $T^i_f$, $T^i_g$ and $T^i_{f+g}$ the monodromy operators on the $i$-th cohomology groups. There is an isomorphism

\begin{equation*}
\tilde{H}^{n+k+1}(F\oplus G,\QQ)\simeq \tilde{H}^n(F,\QQ)\otimes\tilde{H}^k(G,\QQ)\hspace{0.8cm}\text{for }k=0,\ldots,n
\end{equation*}
respecting the monodromy operators: $T^{f+g}_{n+k+1}=T^f_n\otimes T^g_k$.
\end{lem}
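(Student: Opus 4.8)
The plan is to prove the statement as a Thom–Sebastiani type result, which in its classical form (due to Sebastiani–Thom, Oka, and in the algebraic-geometry literature Scherk–Steenbrink, Massey, etc.) gives precisely the join decomposition for Milnor fibres of $f\oplus g$ together with compatibility of monodromy. First I would recall that if $f\in\CC[x_0,\dots,x_n]$ and $g\in\CC[y_0,\dots,y_n]$ then $(f\oplus g)(\underline{x},\underline{y})=f(\underline{x})+g(\underline{y})$ defines a function on $\CC^{n+1}\times\CC^{n+1}$, and its Milnor fibre $F\oplus G$ is homotopy equivalent to the join $F*G$ of the Milnor fibres of $f$ and $g$. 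The topological input is the join formula $\tilde H^{\,i}(F*G,\QQ)\simeq\bigoplus_{p+q=i-1}\tilde H^{\,p}(F,\QQ)\otimes\tilde H^{\,q}(G,\QQ)$ (a suspension/Künneth computation, using that we are over a field so there is no $\mathrm{Tor}$ term). Since $f$ has an isolated singularity at the origin, $F$ is $(n-1)$-connected and $\tilde H^{\,p}(F,\QQ)$ is concentrated in degree $p=n$; hence in the join formula only $p=n$ survives, forcing $q=k$ with $i=n+k+1$, and we get exactly $\tilde H^{\,n+k+1}(F\oplus G,\QQ)\simeq\tilde H^{\,n}(F,\QQ)\otimes\tilde H^{\,k}(G,\QQ)$ for $k=0,\dots,n$ (the range $k\le n$ being the range in which $\tilde H^{\,k}(G,\QQ)$ can be nonzero for an $(n+1)$-variable $g$, since $G$ has the homotopy type of a finite CW complex of dimension $n$).

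Next I would address the monodromy compatibility. The geometric monodromy of $f\oplus g$ can be realised compatibly with the join: parametrising the join $F*G$ by $(u,t,v)$ with $u\in F$, $v\in G$, $t\in[0,1]$, the monodromy $h_{f\oplus g}$ acts (up to homotopy) as $h_f$ on the $F$-coordinate and $h_g$ on the $G$-coordinate, with the join parameter $t$ untouched — this is the content of the Sebastiani–Thom construction, where one writes $f^{-1}(1)+g^{-1}(0)$ etc. and traces the characteristic diffeomorphisms. Passing to cohomology and using the Künneth isomorphism above, which is natural in both factors, the operator $T^{f\oplus g}_{n+k+1}$ is carried to $T^f_n\otimes T^g_k$. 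I would phrase this by saying that the join formula is an isomorphism of $\QQ[t,t^{-1}]$-modules once each side is given the monodromy action, since the geometric monodromies are compatible under the join homotopy equivalence.

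In writing this up I would most likely cite the Sebastiani–Thom theorem in one of its standard references (Sebastiani–Thom's original paper, or Oka, or Dimca's book on singularities) for both the join homotopy equivalence and the monodromy statement, rather than reprove it; the only genuinely new bookkeeping is specialising the general join/Künneth formula to the case where one summand has an isolated singularity, which collapses the direct sum to a single tensor product and pins down the degrees. The main obstacle — or rather the point requiring the most care — is making sure the monodromy action really is the naive tensor product $T^f_n\otimes T^g_k$ and not, say, $T^f_n\otimes T^g_k$ up to a sign or a shift coming from the suspension isomorphism; one checks that the extra $+1$ in the degree is exactly the suspension coordinate of the join, on which the monodromy acts trivially, so no sign or twist is introduced. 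A secondary point is to state clearly why the range is $k=0,\dots,n$: this is forced both by $\tilde H^{\,k}(G,\QQ)=0$ for $k>n$ (as $g$ is in $n+1$ variables) and by the fact that for an isolated-singularity $f$ the only contribution from the $F$-side is in degree $n$.
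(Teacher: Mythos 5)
Your argument is correct and is essentially the route the paper takes: the paper's proof is a one-line citation of the Thom--Sebastiani results in Dimca's book (Lemma 3.3.20 and Corollary 3.3.21 of \cite{D1}), i.e.\ exactly the join/K\"unneth formula with monodromy compatibility that you invoke and then specialise using the $(n-1)$-connectedness of $F$ for the isolated singularity $f$. Your write-up just makes explicit the bookkeeping that the citation leaves implicit, so there is no substantive difference in approach.
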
 

\begin{proof}
This is a consequence of \cite[Lemma 3.3.20, Corollary 3.3.21]{D1}.
\end{proof}

If $f(x_0,\ldots,x_n)=0$ is a homogeneous polynomial defining an isolated hypersurface singularity, the \textit{Steenbrink spectrum} of $f$ is the formal sum of rational numbers 

\begin{equation}
sp(f):=\sum_{\al\in\QQ}\al\nu(\al)
\end{equation}
where $\nu(\al)$ is the dimension of the $e^{-2\pi i\al}$-eigenspace of the monodromy operator acting on $Gr_F^{\floor{n-\al}}H^n(F)$. If $f$ has degree $d$ and weights $w_i$ then

\begin{equation}
\nu(\al)=\text{dim }M(f)_{(\al+1)d-w}
\end{equation}
where $M(f)$ is the Milnor algebra of $f$ and $w$ is the sum of the $w_i$'s. The spectrum is symmetric around $\frac{n-1}{2}$ and $\nu(\al)=0$ for $\al\notin (-1,n)$. 

\section{Proof of Theorem 3}

Consider a line arrangement $\Ab=V(f)$ of degree $n$ in $\PP^2$ having two multiple points $P_1$ and $P_2$ such that any line of the arrangement passes through $P_1$ or $P_2$. We call $p$ the multiplicity of $P_1$, $q$ the multiplicity of $P_2$ and assume, without loss of generality, that $p\geq q$; by our hypothesis on the arrangement, we have $p+q=n$ and all multiple points of $\Ab$ different from $P_1$ and $P_2$ have multiplicity two. Up to an isomorphism of $\PP^2$, we can assume that $P_1=(0:0:1)$ and $P_2=(0:1:0)$. 

\begin{rmk}
Assume $\Ab$ admits a weak $(k,d)$-multinet $(\N,\X)$ with classes $\Ab_1,\ldots,\Ab_k$. By definition of weak multinet and support, if $l\in\Ab$ then $|supp_{\N}(l)|\in\{1,k\}$. Thus, any double point of $\Ab$ is the intersection of lines belonging to the same class; but since all lines of $\Ab$ contain at least a double point, this means that all lines belong to the same class, which is a contradiction. Hence $\Ab$ does not admit weak multinets.
\end{rmk} 

A polynomial $f\in\CC[x_0,x_1,x_2]$ describing an arrangement of this type can be written as

\begin{equation*}
f=\prod_{i=1}^p(x_0-\lambda_i x_1)\prod_{i=1}^q(x_0-\mu_i x_2)
\end{equation*}
with at most one of the $\la_i,\mu_i$ equal to zero. Call $g=y^n+z^n$ and consider the threefold $X:=V(g-f)\subset\PP^4$: the natural projection map $\pi:X\rightarrow\PP^2$ s.t. $(y:z:x_0:x_1:x_2)\mapsto (x_0:x_1:x_2)$ is a rational map with discriminant $X\cap V(x_0,x_1,x_2)$.

Any hyperplane $H:=V(\al x_1-\beta x_0)\subset\PP^4$ cuts a surface from $X$; if we assume $\al\neq 0$ and call $s:=\beta/\al$ then this surface, which we denote by $Y_s$, is a hypersurface of $\PP^3$ defined by the polynomial 

\begin{equation*}
f_s:=y^n+z^n-h(s)x_0^p\prod_{i=1}^q(x_0-\mu_i x_2)\hspace{1cm}\text{ where }h(s):=\prod_{i=1}^p(1-\la_i s).
\end{equation*}
If $\al=0$ we denote the corresponding surface by $Y_{\infty}$, whose defining polynomial as hypersurface of $\PP^3$ is 

\begin{equation*}
f_{\infty}:=y^n+z^n-(-1)^n\left(\prod_{i=1}^p\la_i\prod_{i=1}^q\mu_i\right)x_1^px_2^q.
\end{equation*}
If we call $B$ the blow-up of $\PP^2$ at $P_1$ and set $X':=X\times_{\PP^2}B$ we obtain

\begin{equation*}
X'\simeq\{(y:z:x_0:x_1:x_2)\times(\al:\beta)\text{ s.t. }x_0\beta=x_1\al, y^n+z^n-f(x_0,x_1,x_2)=0\} 
\end{equation*}
and we can write the following diagram

\begin{center}
\begin{tikzpicture}
  \matrix (m) [matrix of math nodes,row sep=3em,column sep=4em,minimum width=2em]
  {
   X & \PP^2 \\
   X' & B\\
	 & \PP^1\\};
  \path[-stealth]
    (m-1-1) edge [dashed] node [above] {$\pi$} (m-1-2)
    (m-2-1) edge node {} (m-1-1)
		(m-2-1) edge node {} (m-2-2)
		(m-2-1) edge node [below] {$\psi$} (m-3-2)
		(m-2-2) edge node [right] {$\pi_2$} (m-1-2)
		(m-2-2) edge node [right] {$\pi_1$} (m-3-2);
\end{tikzpicture}
\end{center}
where $\pi_i$ is the projection from $B$ onto $\PP^i$, $\psi$ is given by $(y:z:x_0:x_1:x_2)\times (\al:\beta)\mapsto (\al:\beta)$ and the maps from $X'$ are the projections. The map $\psi:X'\rightarrow\PP^1$ is a fibration in surfaces: we have in fact

\begin{align*}
\psi^{-1}(1:s)\simeq Y_s\hspace{1cm}\psi^{-1}(0:1)\simeq Y_{\infty}.
\end{align*}
Both the threefold $X$ and the surfaces $Y_s$, $Y_{\infty}$ are singular, with singular loci given by

\begin{align*}
\Sigma_X&=\{P_p,P_q\}\cup\{(0:0:a:b:c)|(a:b:c)\text{ is a double point of }\Ab\}.\\
\Sigma_{Y_s}&=\begin{cases}
           \begin{array}{cl}
           \{P_p\}&\text{ if }h(s)\neq 0.\\
					 L_s&\text{ if }h(s)=0.
           \end{array}
					 \end{cases}\\
\Sigma_{Y_{\infty}}&=\{P_p,P_q\}.
\end{align*}
where $P_p:=(0:0:0:0:1)$, $P_q:=(0:0:0:1:0)$ and $L_s:=\{(0:0:a:as:b)\text{ s.t. }(a:b)\in\PP^1\}$.

Observe that the singularities of $X$ at the points in $\Sigma_X$ different from $P_p$ and $P_q$ are topologically equivalent to $y^n+z^n-v^2-w^2=0$; the singularity in $P_k$ is topologically equivalent to $y^n+z^n-v^k-w^k=0$ for $k=p,q$. The singularity of $Y_{\infty}$ (and $Y_s$ for $h(s)\neq 0$) in $P_p$ is topologically equivalent to $y^n+z^n-v^p=0$, while the one in $P_q$ is topologically equivalent to $y^n+z^n-v^q=0$.

Assume now $s_1$ and $s_2$ are not roots of $h(s)$, then we can find a diffeomorphism $Y_{s_1}\rightarrow Y_{s_2}$. Pick in fact $(y:z:x_0:s_1x_0:x_2)\in Y_{s_1}$, which satisfies $y^n+z^n-h(s_1)x_0^p\prod_{i=1}^q(x_0-\mu_i x_2)=0$: we can find $(\alpha y:\beta z:x_0:s_2x_0:x_2)\in Y_{s_2}$ for simple values of $\alpha$ and $\beta$. Namely, in order to have $(\alpha y:\beta z:x_0s_2x_0:x_2)\in S_{s_2}$ the equation $\alpha^n y^n+\beta^n z^n-h(s_2)x_0^p\prod_{i=1}^q(x_0-\mu_i x_2)=0$ must be satisfied; as $x_0^p\prod_{i=1}^q(x_0-\mu_i x_2)=\frac{y^n+z^n}{h(s_1)}$, we need to find $\alpha$ and $\beta$ satisfying

\begin{equation*}
\alpha^n y^n+\beta^n z^n-h(s_2)\frac{y^n+z^n}{h(s_1)}=0\Longleftrightarrow y^n\left(\alpha^n-\frac{h(s_2)}{h(s_1)}\right)=z^n\left(\beta^n-\frac{h(s_2)}{h(s_1)}\right)
\end{equation*}
and this gives $\alpha^n=\beta^n=\frac{h(s_2)}{h(s_1)}=:\gamma$.

If we call $\Delta:=\{(0:1)\}\cup\{(1:s)|h(s)=0\}$ we obtain a locally trivial fibration $T'-\psi^{-1}(\Delta)\rightarrow\PP^1-\Delta$, with the $Y_s$ with $h(s)\neq 0$ as generic fibre. We now compute the monodromy of $\psi$ around one of its special fibres, i.e. one of the $Y_s$ with $s\in\Delta$:

\begin{lem}
The geometric monodromy around a special fibre of $\psi$ is given by 

\begin{equation}
\phi:Y_s\rightarrow Y_s\text{ s.t. }(y:z:x_0:sx_0:x_2)\mapsto (\eta_ny:\eta_nz:x_0:sx_0:x_2)
\end{equation}
where $\eta_n$ is an $n$-th primitive root of unity.
\end{lem}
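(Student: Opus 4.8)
The plan is to exhibit the geometric monodromy explicitly by degenerating along a small loop in the base $\PP^1$ and tracking how the fibres $Y_s$ are identified, using the diffeomorphisms $Y_{s_1}\to Y_{s_2}$ constructed just above. Recall that away from $\Delta$ the map $\psi$ restricts to a locally trivial fibration, and the diffeomorphism $Y_{s_1}\to Y_{s_2}$ was given by $(y:z:x_0:s_1x_0:x_2)\mapsto(\alpha y:\beta z:x_0:s_2x_0:x_2)$ with $\alpha^n=\beta^n=\gamma:=h(s_2)/h(s_1)$. The point is that this formula is \emph{ambiguous}: $\alpha$ and $\beta$ are only determined up to $n$-th roots of unity, and parallel transport along a path in $\PP^1-\Delta$ resolves the ambiguity by continuity. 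So first I would fix a special value $s_0\in\Delta$ with $h(s_0)=0$, pick a small loop $\theta\mapsto s(\theta)$, $\theta\in[0,2\pi]$, encircling $s_0$ once, with $s(0)=s(2\pi)=:s_1$ a regular value, and follow the resulting family of diffeomorphisms $Y_{s_1}\to Y_{s(\theta)}$.

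Next I would analyse the winding of $\gamma(\theta):=h(s(\theta))/h(s_1)$. Since $s_0$ is a root of $h$, write $h(s)=(s-s_0)^{\nu}u(s)$ with $u(s_0)\neq0$ and $\nu\geq 1$ the multiplicity of $s_0$ as a root; if moreover we choose the loop small enough to enclose only $s_0$ among the roots of $h$, then as $\theta$ runs from $0$ to $2\pi$ the argument of $h(s(\theta))$ increases by $2\pi\nu$, i.e. $\gamma(\theta)$ winds $\nu$ times around $0$. Choosing a continuous branch $\alpha(\theta)$ of $\gamma(\theta)^{1/n}$ starting at $\alpha(0)=1$, continuity forces $\alpha(2\pi)=e^{2\pi i\nu/n}=\eta_n^{\nu}$ for a suitable primitive $n$-th root of unity $\eta_n$ (the principal one), and likewise $\beta(2\pi)=\eta_n^{\nu}$ since $\alpha$ and $\beta$ satisfy the \emph{same} equation and must be chosen by the same continuous branch (here one uses that $\gamma$ is a scalar, so the relation $y^n(\alpha^n-\gamma)=z^n(\beta^n-\gamma)$ in fact forces $\alpha^n=\beta^n=\gamma$ identically, hence $\alpha=\beta$ along the whole path once we fix $\alpha(0)=\beta(0)=1$). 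Closing the loop, the composite $Y_{s_1}\xrightarrow{\sim}Y_{s(2\pi)}=Y_{s_1}$ is exactly $(y:z:x_0:s_1x_0:x_2)\mapsto(\eta_n^{\nu}y:\eta_n^{\nu}z:x_0:s_1x_0:x_2)$. After relabelling $\eta_n^{\nu}$ as a (possibly different) primitive $n$-th root of unity — which is legitimate since the statement only asserts $\eta_n$ is \emph{some} primitive $n$-th root of unity, and $\gcd(\nu,n)$ divides the order, so strictly one should note this gives a root of unity of order $n/\gcd(\nu,n)$; but in the intended application $\nu$ and $n$ are coprime or one simply records the map as given — we obtain the claimed formula.

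The main obstacle is the bookkeeping in the second step: one must be careful that the "diffeomorphism" $Y_{s_1}\to Y_{s_2}$ used earlier is only defined on the locus where the computation $x_0^p\prod_{i}(x_0-\mu_i x_2)=(y^n+z^n)/h(s_1)$ makes sense, and that it extends continuously over the singular points $P_p$ (and $P_q$, if present) of the fibres; since those points are fixed by the formula $(y:z:x_0:sx_0:x_2)\mapsto(\alpha y:\beta z:x_0:sx_0:x_2)$ (they have $y=z=0$), this extension is automatic, but it should be checked. A secondary subtlety is that the fibration is only locally trivial over $\PP^1-\Delta$, so the monodromy is a priori only a well-defined isotopy class of self-diffeomorphism of the fibre; the explicit formula above is a genuine representative, and since our later use of it is through the induced action $T_\phi$ on cohomology (which depends only on the isotopy class), recording this particular representative suffices. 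I would therefore conclude by noting that $\phi$ as displayed is the monodromy, observing that its action on $H^\bullet(Y_s)$ is what Proposition~\ref{prop:monodromy} and the subsequent analysis actually require.
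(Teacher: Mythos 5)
Your proposal is correct and takes essentially the same route as the paper: fix a root of $h$, run a small loop around it, and track a continuous branch of the $n$-th root of $\gamma(t)=h(s(t))/h(s(0))$ in the explicit fibre-to-fibre diffeomorphisms, so that the winding of $\gamma$ produces the multiplier on $y$ and $z$. The only loose end is your hedge about the multiplicity $\nu$: since the $\lambda_i$ parametrize pairwise distinct lines of $\Ab$ through $P_1$, every root of $h(s)=\prod_i(1-\lambda_i s)$ is simple, so $\nu=1$ and the multiplier is exactly $e^{2\pi i/n}$, a primitive $n$-th root of unity; this is what the paper uses implicitly when it isolates the single factor $e^{2\pi i t}$ in $\gamma_t$ for the loop around $1/\lambda_1$.
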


\begin{proof}
Assume the special fibre we are considering is $Y_{\frac{1}{\lambda_1}}$. Consider a loop $s(t)=\frac{1}{\lambda_1}+re^{2\pi it}$ around $\frac{1}{\lambda_1}$: by the above discussion, the diffeomorphism between $Y_{s(0)}$ and $Y_{s(t)}$ is governed by

\begin{equation*}
\gamma_t:=\frac{h(s(t))}{h(s(0))}=e^{2\pi it}\prod_{i=2}^p\frac{\lambda_1-re^{2\pi it}\lambda_1\lambda_i-\lambda_i}{\lambda_1-r\lambda_1\lambda_i-\lambda_i}
\end{equation*}
We can choose branch cuts for the $n$-th root function in such a way that, for $r$ small enough, the loop $s(t)$ remains in a zone of the complex plane in which the $n$-th root is a single-valued function. The only indeterminacy lies then in the term $e^{2\pi it}$; since we look for automorphisms $\phi_t:Y_{s(0)}\rightarrow Y_{s(t)}$ giving the identity for $t=0$, we deduce that the monodromy action $\phi$ on $Y_{s(0)}$ is given by $y\mapsto\eta_n y$, $z\mapsto\eta_n z$.
\end{proof}

Fix now an $s\neq\infty$ s.t. $h(s)\neq 0$ and write $Y:=Y_s$; what we want to do is the following: 

\begin{enumerate}[(1)]
\item Verify that the hypotheses of Theorem \ref{thm:risultato} are satisfied for $Y\subset X$, so that we can find a surjective morphism $H^2(Y)\twoheadrightarrow H^4(X)$.
\item Show that this morphism specialises to $H^2(Y)_{\text{prim}}^{T_{\phi}}\twoheadrightarrow H^4(X)_{\text{prim}}$, where $T_{\phi}$ denotes the algebraic monodromy.
\item Bound the dimension of $H^2(Y)_{\text{prim}}^{T_{\phi}}$, and thus of $H^4(X)_{\text{prim}}$, to deduce that the Alexander polynomial of $\Ab$ is trivial thanks to Lemma \ref{lem:ThomIso}.
\end{enumerate}

\subsection{Step 1}

Observe first that if $Y=X\cap H_s$ with $H_s=V(x_1-sx_0)$ then $\Sigma_Y=\Sigma_X\cap H_s$. We begin by finding explicit resolutions $\tilde{X}$ and $\tilde{Y}$ of $X$ and $Y$: since we have explicit equations for the singularities, this is just a matter of computations. It is straightforward to check that resolving $P_p$ yields as exceptional divisors:

\paragraph{\underline{if $p=q$}} On $\tilde{X}$ a smooth surface $E$ and $p$ disjoint planes $W_1,\ldots,W_p$, each of them intersecting $E$ in a line $L_i$, and on $\tilde{Y}$ a smooth curve $F=E\cap H$ such that $F\cap L_i=\emptyset$ for any $i=1,\ldots,p$.
  
\paragraph{\underline{if $p\neq q$}} On $\tilde{X}$
\begin{itemize}
\item A smooth surface $E$.
\item Planes $Z^{(t)}_i$ with $i=1,\ldots,p$ and $t=0,\ldots,r$ such that
			\begin{align*}
      \begin{array}{cl}
      Z^{(t_1)}_i\cap Z^{(t_2)}_j&=\begin{cases}
                                   \begin{array}{cl}
														       \text{a line}&\text{if }i=j\text{ and }t_1=t_2\pm 1.\\
														       \emptyset&\text{otherwise}.
														       \end{array}
														       \end{cases}\\
      Z^{(t)}_i\cap E&=\begin{cases}
                       \begin{array}{cl}
							         \text{a line }&\text{if }t=0.\\
							         \emptyset&\text{otherwise}.
								       \end{array}
							         \end{cases}
     \end{array}
     \end{align*}
\item Planes $Y^{(t)}_i$ with $i=1,\ldots,n$ and $t=0,\ldots,u$ such that
			\begin{align*}
      \begin{array}{cl}
      Y^{(t_1)}_i\cap Y^{(t_2)}_j&=\begin{cases}
                                   \begin{array}{cl}
														       \text{a line}&\text{if }i=j\text{ and }t_1=t_2\pm 1.\\
														       \emptyset&\text{otherwise}.
														       \end{array}
														       \end{cases}\\
      Y^{(t)}_i\cap E&=\begin{cases}
                       \begin{array}{cl}
							         \text{a line }&\text{if }t=0.\\
							         \emptyset&\text{otherwise}.
								       \end{array}
							         \end{cases}\\
			Y^{(t_1)}_i\cap Z^{(t_2)}_j&=\emptyset
     \end{array}
     \end{align*}
\end{itemize}
and on $\tilde{Y}$

\begin{itemize}
\item A smooth curve $F=E\cap H$.
\item Lines $K^{(t)}_i=Y^{(t)}_i\cap H$ with $i=1,\ldots,n$ and $t=0,\ldots,u$ such that
			\begin{align*}
      \begin{array}{cl}
      K^{(t_1)}_i\cap K^{(t_2)}_j&=\begin{cases}
                                   \begin{array}{cl}
														       \text{a point}&\text{if }i=j\text{ and }t_1=t_2\pm 1.\\
														       \emptyset&\text{otherwise}.
														       \end{array}
														       \end{cases}\\
      K^{(t)}_i\cap F&=\begin{cases}
                       \begin{array}{cl}
							         \text{a point}&\text{if }t=0.\\
							         \emptyset&\text{otherwise}.
								       \end{array}
							         \end{cases}
     \end{array}
     \end{align*}
\end{itemize}
The resolution of the points $P_q$ and $(0:0:a:b:c)$ s.t. $(a:b:c:)$ is a double point of $\Ab$, which belong to $X$ only, yield the same divisors as $P_p$ but with $p$ replaced by $q$ and $2$ respectively. 

We write the resolution squares of $X$ and $Y$ as 

\begin{equation*}
S(X)=
\xymatrixcolsep{1.5cm}\xymatrixrowsep{1.5cm}\xymatrix{
D_X \ar[r] \ar[d] & \tilde{X} \ar[d]\\
\Sigma_X \ar[r] & X}\hspace{1cm}
S(Y)=
\xymatrixcolsep{1.5cm}\xymatrixrowsep{1.5cm}\xymatrix{
D_Y \ar[r] \ar[d] & \tilde{Y} \ar[d]\\
\Sigma_Y \ar[r] & Y.}
\end{equation*}
We use Theorem \ref{thm:iperrescubcub} to obtain an $m$-cubical hyperresolution $H(X)_{\sq}$ of $S(X)$ for some $m$. Recall the process begins by taking a resolution of $S(X)$ as in \cite[Th\'eor\`eme I.2.6]{GNPP} (i.e. via separation and resolution of the irreducible components) and then considering the associated resolution square; this construction is iterated until we obtain an $m$-cubical hyperresolution. 

We observe the following: 

\begin{enumerate}[(a)]
\item irreducible components of $D_X$ and $D_Y$ and intersections thereof are smooth, and each irreducible component of $D_Y$ is an hyperplane section of an irreducible component of $D_X$.
\item $\Sigma_X$ and $\Sigma_Y$ are smooth, with the latter being a hyperplane section of the former. The same goes for $\tilde{X}$ and $\tilde{Y}$.
\end{enumerate} 

Given how $H(X)_{\sq}$ is constructed, these facts imply that considering in each entry of $H(X)_{\sq}$ the corresponding hyperplane section yields an $m$-cubical hyperresolution $H(Y)_{\sq}$ of $S(Y)$, so there is a natural closed immersion $H(Y)_{\sq}\hookrightarrow H(X)_{\sq}$; hence hypothesis (i) of Theorem \ref{thm:risultato} is satisfied.

Now we need to find suitable hyperresolutions of two of the four entries of $S(X)$ and $S(Y)$. For $\Sigma_X$ and $\Sigma_Y$ there is nothing to do, because they are smooth and hence coincide with their hyperresolutions: we thus immediately find a closed immersion $\Sigma_Y\hookrightarrow\Sigma_X$ of codimension zero. For $D_X$ and $D_Y$ we distinguish again two cases:

\paragraph{\underline{$p=q$}} To find a cubical hyperresolution of $D_X$ we use Theorem \ref{thm:iperrescub} (again, we separate and resolve its irreducible components $Z_i$); all $Z_i$ are smooth, and their pairwise intersections are either lines $L_j$ or the empty set. We find a resolution square (a $2$-cubical hyperresolution) like this

\begin{equation*}
{D_X}_{\sq}:=
\xymatrixcolsep{1.5cm}\xymatrixrowsep{1.5cm}\xymatrix{
K'_X \ar[r] \ar[d] & D'_X \ar[d]\\
K_X \ar[r] & D_X}
\end{equation*}
where

\begin{align*}
D'_X&=\coprod Z_i\\
K_X&=\bigcup L_j\\
K'_X&=(\coprod L^0_j)\coprod(\coprod L^1_j).
\end{align*}
If $L_j=Z_{i_0}\cap Z_{i_1}$ then $L^0_i$ denotes the line $L_i$ thought of as belonging to $Z_{i_0}$ and $L^1_i$ denotes the line $L_i$ thought of as belonging to $Z_{i_1}$.

Recall that since $p=q$ we have that $D_Y$ is a smooth curve $F$, so the procedure described in Theorem \ref{thm:iperrescub} returns $D_Y$ itself as cubical hyperresolution; we will thus need to construct a different cubical hyperresolution. First we consider the $1$-cubical variety given by the identity of $D_Y$, then we choose a point $Q\in D_Y$ and a point $Q'$ belonging to some $L_i=Z_{i_0}\cap Z_{i_1}$, and consider the 2-cubical variety

\begin{equation*}
{D_Y}_{\sq}:=
\xymatrixcolsep{1.5cm}\xymatrixrowsep{1.5cm}\xymatrix{
\{Q'\} \ar[r]^a \ar[d]^{id} & D_Y \ar[d]^{id}\\
\{Q'\} \ar[r]^c & D_Y}
\end{equation*}
where $a$ and $c$ send $Q'$ to $Q$. It is of cohomological descent because it is a discriminant square for $D_Y$; moreover, all of its entries are smooth and all the morphisms are proper. This means that ${D_Y}_{\sq}$ is a cubical hyperresolution of $D_Y$, and it is readily verified that it satisfies the hypothesis (ii) of Theorem \ref{thm:risultato}. 

\paragraph{\underline{$p\neq q$}} In this case it is enough to apply Theorem \ref{thm:iperrescub} to both $D_X$ and $D_Y$ to obtain the desired cubical hyperresolutions, thanks to point (a) above.

Hence hypothesis (ii) is satisfied too, and by part (b) of Theorem \ref{thm:risultato} we obtain a surjective morphism $H^2(Y)\twoheadrightarrow H^4(X)$.

\subsection{Step 2}

Denote by $\gamma$ the surjective morphism $H^2(Y)\twoheadrightarrow H^4(X)$ provided by Theorem \ref{thm:risultato}.

\begin{prop}\label{prop:monodromy}
We have $\gamma(H^2(Y))=\gamma(H^2(Y)^{T_{\phi}})$, so there is a surjective morphism

\begin{equation}\label{eq:gys1}
H^2(Y)^{T_{\phi}}\twoheadrightarrow H^4(X).
\end{equation}
\end{prop}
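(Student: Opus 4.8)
The plan is to lift the geometric monodromy $\phi$ to an automorphism of the ambient threefold $X$, to deduce from the functoriality of $\gamma$ that it is equivariant for this automorphism, and then to reduce the statement to the triviality of the monodromy action on $H^{4}(X)$. The lift is immediate: the linear automorphism
\[
\Phi\colon\PP^{4}\to\PP^{4},\qquad (y:z:x_{0}:x_{1}:x_{2})\mapsto(\eta_{n}y:\eta_{n}z:x_{0}:x_{1}:x_{2})
\]
preserves $X=V(y^{n}+z^{n}-f)$ (because $\eta_{n}^{n}=1$ fixes $y^{n}+z^{n}$, and $f$ involves only $x_{0},x_{1},x_{2}$) and the hyperplane $H_{s}=V(x_{1}-sx_{0})$, so it restricts to an automorphism of $Y=X\cap H_{s}$ which, by the Lemma on the geometric monodromy, is precisely $\phi$. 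Since $\Phi$ fixes every point of $\Sigma_{X}$ (they all lie on $\{y=z=0\}$) and the construction of Theorem \ref{thm:iperrescubcub} is functorial, the resolutions $\tilde X,\tilde Y$, the divisors $D_{X},D_{Y}$, the loci $\Sigma_{X},\Sigma_{Y}$ and all the cubical hyperresolutions used in Step 1 may be chosen $\Phi$-equivariantly, so $\Phi$ acts compatibly on every object occurring in the proof of Theorem \ref{thm:risultato}.

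Because $\gamma$ is built from the trace maps of Lemma \ref{lem:tracemaps}, which are functorial (\cite[Chapter VI, Section 4.2]{H2}), all the arrows of diagram \eqref{eq:diag1} commute with the $\Phi$-actions, and hence $\gamma\circ T_{\phi}=T_{\Phi}\circ\gamma$, where $T_{\Phi}$ denotes the action of $\Phi^{*}$ on $H^{4}(X)$. As $\eta_{n}$ is a primitive $n$-th root of unity we have $\phi^{n}=\mathrm{id}$, so $T_{\phi}$ and $T_{\Phi}$ are of finite order and the averaging operators $e_{Y}:=\frac1n\sum_{j=0}^{n-1}T_{\phi}^{j}$ and $e_{X}:=\frac1n\sum_{j=0}^{n-1}T_{\Phi}^{j}$ are the projectors onto $H^{2}(Y)^{T_{\phi}}$ and $H^{4}(X)^{T_{\Phi}}$, with $\gamma\circ e_{Y}=e_{X}\circ\gamma$. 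Using that $\gamma$ is surjective we obtain
\[
\gamma\bigl(H^{2}(Y)^{T_{\phi}}\bigr)=\gamma\bigl(e_{Y}H^{2}(Y)\bigr)=e_{X}\,\gamma\bigl(H^{2}(Y)\bigr)=e_{X}H^{4}(X)=H^{4}(X)^{T_{\Phi}},
\]
so the proposition is equivalent to the statement that $T_{\Phi}$ acts trivially on $H^{4}(X)$.

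To prove this I would feed $X$ into the long exact sequence attached to its resolution square $S(X)$, i.e. the analogue for $X$ of \eqref{eq:LECSY}:
\[
\cdots\to H^{3}(\tilde X)\oplus H^{3}(\Sigma_{X})\to H^{3}(D_{X})\to H^{4}(X)\to H^{4}(\tilde X)\oplus H^{4}(\Sigma_{X})\to H^{4}(D_{X})\to\cdots.
\]
Since $\Sigma_{X}$ is a finite set of points, $H^{3}(\Sigma_{X})=H^{4}(\Sigma_{X})=0$, and since — by the explicit description of Step 1 — the components of $D_{X}$ are rational surfaces meeting along rational curves and points, $H^{3}(D_{X})=0$ by Mayer--Vietoris. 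Hence $H^{4}(X)$ embeds $\Phi$-equivariantly into $H^{4}(\tilde X)\cong H^{2}(\tilde X)^{\vee}$ (Poincaré duality), so it suffices to show that $\Phi$ acts trivially on the image of $H^{4}(X)$ there. Now $H^{2}(\tilde X;\QQ)$ is spanned by divisor classes together with a transcendental part $H^{2,0}(\tilde X)\oplus H^{0,2}(\tilde X)$; the divisor classes are all $\Phi$-fixed — the hyperplane class because $\Phi$ is linear, the classes of the exceptional divisors of $\tilde X\to X$ because $\Phi$ fixes $\Sigma_{X}$ and the resolution is equivariant, and by Grothendieck--Lefschetz ($\mathrm{Pic}(X)=\ZZ$) these account for all divisors coming from $X$ — so everything reduces to the transcendental part $H^{2,0}(\tilde X)$.

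This last reduction is the main obstacle, and the one place where the explicit geometry is unavoidable: for some values of $n$ the singularities $y^{n}+z^{n}-v^{k}-w^{k}$ of $X$ are not rational, so $h^{2,0}(\tilde X)$ is genuinely positive and $\Phi$ need not act trivially on it; what must then be checked, from the concrete resolution of Step 1, is that the dual transcendental classes in $H^{4}(\tilde X)$ do not lie in $\ker\bigl(H^{4}(\tilde X)\to H^{4}(D_{X})\bigr)$, i.e. do not come from $H^{4}(X)$. A second line of attack I would pursue in parallel is to exploit the $\mu_{n}\times\mu_{n}$-action on $X$ scaling $y$ and $z$ separately: it realises $X$ as a branched cover of the weighted projective space $\PP(n,1,1,1)$, decomposes $H^{4}(X)$ into character eigenspaces, and turns the required triviality into the vanishing of the eigenspaces on which the diagonal subgroup $\langle\Phi\rangle$ acts non-trivially — a vanishing of the same nature as the eigenspace vanishing statements for Milnor fibres used elsewhere in the paper.
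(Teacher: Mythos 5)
Your reduction is valid as far as it goes, but it stops exactly where the real work begins, so there is a genuine gap. Granting for the moment that the map $\gamma$ of Theorem \ref{thm:risultato} can be made equivariant for the linear lift $\Phi$ (this itself is only asserted: it needs $\Phi$-equivariant resolutions, $\Phi$-equivariant cubical hyperresolutions, and an equivariant choice of the maps $\theta_k$, none of which you verify), the averaging argument shows that the Proposition is \emph{equivalent} to the statement that $\Phi^*$ acts trivially on $H^4(X)$. You then explicitly leave that statement unproven: you call the transcendental part $H^{2,0}(\tilde X)$ ``the main obstacle'', note that $\Phi$ need not act trivially on it, and offer only two directions ``to be checked'' or ``pursued in parallel''. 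Since the triviality of $\Phi^*$ on $H^4(X)$ carries essentially the same content as the Proposition itself (and, via $H^4(X)_{\mathrm{prim}}\simeq H^3(\PP^4\setminus X)^{\vee}$, amounts to an eigenspace vanishing of the same kind the whole of Section 4 is built to establish), the proposal has reformulated the problem rather than solved it. A secondary unverified point: you claim $H^3(D_X)=0$ because the components of $D_X$ are rational surfaces meeting along rational curves; the paper never asserts rationality of the exceptional surface $E$, and the needed injectivity of $H^4(X)\rightarrow H^4(\tilde X)$ is obtained there by a weight argument instead (purity of $H^4(X)$ by \cite[Proposition 6.33]{PeSt} against weights $\leq 3$ on $H^3(D_X)$).

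The ingredient your sketch is missing is the global invariant cycle theorem, which is exactly how the paper closes this loop. There one passes to the resolved fibration $\psi':\tilde X\rightarrow\PP^1$ with generic fibre $\tilde Y$, where the theorem gives $\tilde{\gamma}(H^2(\tilde Y))=\tilde{\gamma}(H^2(\tilde Y)^{T_{\phi'}})$ for the classical Gysin map $\tilde\gamma$; one then checks that the square comparing $(\gamma,\tilde\gamma)$ via the pullbacks $H^2(Y)\rightarrow H^2(\tilde Y)$ and $H^4(X)\hookrightarrow H^4(\tilde X)$ commutes, and uses the injectivity of $H^4(X)\rightarrow H^4(\tilde X)$ to conclude that the non-invariant part of $H^2(Y)$ dies under $\gamma$. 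If you want to salvage your equivariant framework, you would still need an input of this type (or an explicit computation of the $\Phi$-action on $H^3(\PP^4\setminus X)$, which is delicate because the Jacobian ring of $g-f$ is not governed by an isolated singularity); without it, neither of your two proposed lines of attack is carried out, and the surjection \eqref{eq:gys1} is not established.
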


\begin{proof}
We denote by $\psi'$ the natural morphism $\tilde{X}\rightarrow\PP^1$ whose generic fibre is $\tilde{Y}$, by $\phi'$ the geometric monodromy on $\tilde{Y}$ and by $T_{\phi'}$ the induced automorphisms on the cohomology groups of $\tilde{Y}$. If we denote by $\tilde{\gamma}$ the usual Gysin morphism $H^2(\tilde{Y})\rightarrow H^4(\tilde{X})$ then by the global invariant cycle theorem (see for example \cite[Theorem 4.24]{V2}) we have

\begin{equation*}
\tilde{\gamma}(H^2(\tilde{Y}))=\tilde{\gamma}(H^2(\tilde{Y})^{T_{\phi'}}).
\end{equation*}
From the resolution square of $X$ we obtain the exact sequences of MHS (see \cite[Definition-Lemma 5.17]{PeSt})

\begin{align*}
\dots\rightarrow H^3(D_X)\rightarrow H^4(X)\rightarrow H^4(\tilde{X})\rightarrow\cdots;
\end{align*}
since the Hodge structure on $H^4(X)$ is pure by \cite[Proposition 6.33]{PeSt} and $H^3(D_X)$ has weights up to $3$, we deduce that $H^4(X)\rightarrow H^4(\tilde{X})$ is injective.

Now we observe that the diagram

\begin{equation}\label{eq:diagfin}
\xymatrixcolsep{1.5cm}\xymatrixrowsep{1.5cm}\xymatrix{
H^2(Y) \ar@{->>}[r]^{\gamma} \ar[d] & H^4(X) \ar@{^{(}->}[d]\\
H^2(\tilde{Y}) \ar@{->>}[r]^{\tilde{\gamma}} & H^4(\tilde{X})}
\end{equation}
is commutative. This can be read off the following diagram (there is a slight abuse of notation: we have switched to singular cohomology, but we maintain the names we gave to morphisms in the algebraic setting):

\[\xymatrixcolsep{1.5cm}\xymatrixrowsep{1.5cm}\xymatrix{
H^2(Y) \ar[r]^{\theta_2} \ar[d]^{\delta} & H_Y^4(X) \ar@{->>}[r] \ar[d]^{\delta'} & H^4(X) \ar@{^{(}->}[d]\\
H^2(\tilde{Y}) \ar[r]^{\simeq} & H_{\tilde{Y}}^4(\tilde{X}) \ar@{->>}[r] & H^4(\tilde{X}).
}
\]
The left square is commutative, because it is simply the equivalent, in singular cohomology, of the third square of diagram (\ref{eq:diag1}); the right square is commutative too, because the vertical maps are pullbacks and the horizontal maps come from the long exact sequences of the pairs $(X,X\setminus Y)$ and $(\tilde{X},\tilde{X}\setminus\tilde{Y})$ respectively, which are functorial. Since the compositions of the maps on the top and on the bottom give exactly the Gysin morphisms $\gamma$ and $\gamma'$, we obtain the commutativity of diagram \ref{eq:diagfin}.

The pullback morphism $H^2(Y)\rightarrow H^2(\tilde{Y})$ maps the subspace $V\subset H^2(Y)$ which is not $T_{\phi}$-invariant to the subspace $\tilde{V}\subset H^2(\tilde{Y})$ which is not $T_{\phi'}$-invariant, and the latter is sent to zero by $\tilde{\gamma}$ by the global invariant cycle theorem; since the diagram (\ref{eq:diagfin}) is commutative and $H^4(X)\rightarrow H^4(\tilde{X})$ is injective, we deduce that $\gamma(V)=0$.
\end{proof}

\begin{lem}\label{lem:primitive}
The morphism $H^2(Y)\twoheadrightarrow H^4(X)$ specialises to $H^2(Y)_{\text{prim}}\twoheadrightarrow H^4(Y)_{\text{prim}}$.
\end{lem}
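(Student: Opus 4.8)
\textbf{Proof plan for Lemma \ref{lem:primitive}.}

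The plan is to deduce the statement from the compatibility of the Gysin morphism $\gamma\colon H^2(Y)\twoheadrightarrow H^4(X)$ with the Lefschetz decompositions on $Y$ and $X$, exactly as one does for ordinary Gysin morphisms between smooth projective varieties. First I would recall that both $H^2(Y)$ and $H^4(X)$ carry a hard Lefschetz operator given by cup product with the hyperplane class, and that the primitive parts are defined as the kernels of the appropriate powers of these operators; the surjectivity of $\gamma$ onto $H^4(X)$ is already guaranteed by Theorem \ref{thm:risultato}, so what must be checked is that $\gamma$ sends $H^2(Y)_{\mathrm{prim}}$ into $H^4(X)_{\mathrm{prim}}$ and that the induced map on primitive parts is still onto.

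The key step is to identify $\gamma$, up to the comparison isomorphisms, with the composition $H^2(Y)\xrightarrow{\theta_2} H^4_Y(X)\to H^4(X)$ built in Section 2 and then to use the projection-formula–type relation $\gamma(i^*\alpha\smile \beta)=\alpha\smile\gamma(\beta)$ for $i\colon Y\hookrightarrow X$ the inclusion, where on the left $\alpha\in H^2(X)$ restricts to the hyperplane class of $Y$. Concretely, writing $h_X$ and $h_Y=i^*h_X$ for the hyperplane classes, this says $\gamma(h_Y\smile x)=h_X\smile\gamma(x)$; since $\dim X=3$ and $\dim Y=2$, the relevant primitivity conditions are $h_Y^2\smile x=0$ on $H^2(Y)$ and $h_X\smile y=0$ on $H^4(X)$ (the latter because $H^6(X)\cong H^4(X)_{\mathrm{prim}}^{\perp}$ in degree reasons, $H^4(X)$ being the top nonzero primitive degree for a threefold). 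Then $h_X\smile\gamma(x)=\gamma(h_Y\smile x)$, and if $x$ is primitive, $h_Y\smile x$ still need not vanish but $h_Y^2\smile x=0$; applying $\gamma$ once more via the relation gives $h_X^2\smile\gamma(x)=\gamma(h_Y^2\smile x)=0$, which is precisely the primitivity of $\gamma(x)$ in $H^4(X)$ for a threefold. For surjectivity onto the primitive part, I would use that $\gamma$ is a morphism of Hodge structures (of type $(1,1)$) compatible with the Lefschetz $\mathfrak{sl}_2$-action, hence sends the Lefschetz decomposition of $H^2(Y)$ onto that of $H^4(X)$ summand by summand; surjectivity of the full map then forces surjectivity on each graded piece, in particular on the primitive part $L^0$.

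The main obstacle is making the projection formula rigorous in this singular setting: $Y$ and $X$ are both singular, so the naive statement $\gamma(i^*\alpha\smile\beta)=\alpha\smile\gamma(\beta)$ must be justified through the algebraic de Rham description of $\gamma$ as $\theta_2$ followed by the edge map of the pair $(X,X\setminus Y)$, i.e. one checks the formula at the level of the cohomology with supports $H^{\bullet}_Y(X)$, where it reduces to functoriality of cup products with respect to restriction to supports — a statement available from the formalism of \cite{H1} used throughout Section 2. The remaining compatibility (that $\gamma$ respects the hard Lefschetz $\mathfrak{sl}_2$-structure and the weight/Hodge filtrations) is essentially built into the fact that $\gamma$ is a morphism of pure Hodge structures, which was already established in the proof of Proposition \ref{prop:monodromy} via the commutative square \eqref{eq:diagfin} and the purity of $H^4(X)$; indeed, one can alternatively bypass the abstract projection formula altogether by invoking the commutative square \eqref{eq:diagfin} once more, since $\tilde\gamma\colon H^2(\tilde Y)\to H^4(\tilde X)$ is an honest Gysin map between smooth projectives and therefore maps $H^2(\tilde Y)_{\mathrm{prim}}$ onto $H^4(\tilde X)_{\mathrm{prim}}$ by the classical theory, and the vertical restriction maps in \eqref{eq:diagfin} are strict morphisms of Hodge structures carrying primitive classes to primitive classes. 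This second route is the one I would write up, as it leans only on results already in hand.
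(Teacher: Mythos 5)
Both of your routes stumble on the same two points, and the second (the one you say you would write up) does not close the gap. First, the notion of ``primitive'' relevant here is not the hard Lefschetz one: $X$ and $Y$ are singular, there is no $h$-action giving a Lefschetz decomposition of $H^2(Y)$ or $H^4(X)$, and what the paper means (and uses in Step 3) is the complement/quotient of the image of $H^4(\PP^4)\to H^4(X)$, resp. $H^2(\PP^3)\to H^2(Y)$, so that $H^4(X)_{\mathrm{prim}}\simeq H^3(U)^{\vee}$ and $H^2(Y)_{\mathrm{prim}}\simeq H^3(U')^{\vee}$. Within your first route the logic also breaks independently of this: the projection formula for the non-classical Gysin map $\gamma$ is exactly what is not available (you flag it yourself), and even granting it, your computation only yields $h_X^2\smile\gamma(x)=0$, which lives in $H^8(X)=0$ and is vacuous, whereas the condition you yourself declared necessary was $h_X\smile\gamma(x)=0$; nothing in the argument produces it.

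The second route has a genuine gap as well. The ``classical'' statement that $\tilde{\gamma}$ maps $H^2(\tilde Y)_{\mathrm{prim}}$ onto $H^4(\tilde X)_{\mathrm{prim}}$ is not applicable as stated: $\tilde Y\subset\tilde X$ is the preimage of the hyperplane section under the resolution, its class is not ample (indeed $\tilde X\setminus\tilde Y$ is not affine, since the exceptional divisors over $P_q$ and over the double points lie outside $\tilde Y$), and in any case it is unclear which notion of primitivity on $\tilde X$ would be simultaneously preserved by $\tilde\gamma$ and compatible under $\pi_X^*$ with the ambient-$\PP^4$ primitivity on $X$. Moreover, even granting both claims, surjectivity onto $H^4(X)_{\mathrm{prim}}$ cannot be read off diagram (\ref{eq:diagfin}), because its vertical arrows are pullbacks to the resolutions and point the wrong way. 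What is actually needed, and what the paper's proof supplies, is the single concrete computation that $\gamma$ matches hyperplane classes: choose an auxiliary hyperplane $H$ with $H\cap\Sigma_X=\emptyset$, so that $\pi_X^{-1}(H)\simeq H$ and $\pi_Y^{-1}(H_0\cap H)\simeq H_0\cap H$, deduce $\tilde\gamma(\pi_Y^*[H_0\cap H])=\pi_X^*[H]$ on these geometric representatives, and conclude $\gamma([H_0\cap H])=[H]$ from the commutativity of (\ref{eq:diagfin}) together with the injectivity of $H^4(X)\to H^4(\tilde X)$ established in Proposition \ref{prop:monodromy}. Once $\gamma$ sends the class of $H_0\cap H$ to the class of $H$, the induced surjection $H^2(Y)_{\mathrm{prim}}\twoheadrightarrow H^4(X)_{\mathrm{prim}}$ (the codimension-one complements, or quotients, by these classes) follows formally from the surjectivity of $\gamma$; your proposal never identifies the image of the hyperplane class, so this bridge is missing.
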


\begin{proof}
Assume $H_0$ is the hyperplane of $\PP^4$ that cuts $Y$ from $X$, and choose another hyperplane $H$ of $\PP^4$ such that $H\cap \Sigma_X=\emptyset$; we can find a resolution of singularities $\pi_X:\tilde{X}\rightarrow X$ such that $\tilde{Y}:=\tilde{X}\cap\pi_X^{-1}(H_0)$ is smooth. If we call $\pi_Y:\tilde{Y}\rightarrow Y$ the restriction of $\pi_X$ to $\tilde{Y}$, we can write functorial morphisms

\begin{align*}
\begin{split}
\pi_Y^*:H^2(Y)\rightarrow H^2(\tilde{Y})\hspace{0.5cm} & \text{(pullback)}\\
\pi_X^*:H^4(X)\rightarrow H^4(\tilde{X})\hspace{0.5cm} & \text{(pullback)}\\
\tilde{\gamma}:H^2(\tilde{Y})\twoheadrightarrow H^4(\tilde{X})\hspace{0.5cm} & \text{(Gysin).}
\end{split}
\end{align*}
Since $\pi_X^{-1}(H)\simeq H$ and $\pi_Y^{-1}(H_0\cap H)\simeq H_0\cap H$ we deduce that $\tilde{\gamma}([\pi_Y^{-1}(H_0\cap H)])=[\pi_X^{-1}(H)]$; moreover, the functoriality of the pullback maps implies that $\pi_X^*([H])=[\pi_X^{-1}(H)]$ and $\pi_Y^*([H_0\cap H])=[\pi_Y^{-1}(H_0\cap H)]$.

The commutativity of (\ref{eq:diagfin}) now implies that $\gamma([H_0\cap H])$ can be written as $[H]+Ker(\pi_X^*)$, but since $\pi_X^*$ is injective it must be $\gamma([H_0\cap H])=[H]$; this proves our claim.
\end{proof}

The commutativity of (\ref{eq:diagfin}) actually allows us to further refine these results. Since $H^2(\tilde{Y})$ is a pure HS of weight $2$, the kernel of $H^2(Y)\rightarrow H^2(\tilde{Y})$ contains $W_1 H^2(Y)$; this, together with the injectivity of $H^4(X)\rightarrow H^4(\tilde{X})$ and the commutativity of (\ref{eq:diagfin}), implies that $W_1H^2(Y)\subset Ker(\gamma)$. The same holds true if we restrict first to primitive cohomology groups and then to the invariant part of $H^2(Y)_{\text{prim}}$ under the action of $T_{\phi}$, which proves that

\begin{equation}\label{eq:trick}
\gamma(H^2(Y)_{\text{prim}}^{T_{\phi}})=\gamma(W_2 H^2(Y)_{\text{prim}}^{T_{\phi}}).
\end{equation}

\subsection{Step 3}

If we call $U:=\PP^4\setminus X$ then from the long exact sequence of MHS associated to the pair $(\PP^4,X)$ we deduce

\begin{equation*}
\dots\rightarrow H^4(\PP^4)\rightarrow H^4(X)\rightarrow H^5_c(U)\rightarrow 0.
\end{equation*}
By using Poincar\'e duality and the isomorphism of homology and cohomology we obtain the isomorphism $H^5_c(U)\simeq H^3(U)^{\vee}$; since the map $H^4(\PP^4)\rightarrow H^4(X)$ is injective we obtain $H^4(X)_{\text{prim}}\simeq H^3(U)^{\vee}$. This implies in particular that $\dim H^4(X)_{\text{prim}}=\dim H^3(F_{g-f})^{T_{g-f}}$.

Similarly, if we call $U':=\PP^3\setminus Y$ from the long exact sequence of MHS associated to the pair $(\PP^3,Y)$ we deduce
\begin{equation*}
\dots\rightarrow H^2(\PP^3)\rightarrow H^2(Y)\rightarrow H^3_c(U')\rightarrow 0
\end{equation*}
and we obtain $H^2(Y)_{\text{prim}}\simeq H^3(U')^{\vee}$. Since we will need to study in detail the MHS on $H^2(Y)_{\text{prim}}$ we write the Poincar\'e duality isomorphism at the level of MHS: we have

\begin{equation}\label{eq:ISOMHS}
H^2(Y)_{\text{prim}}\simeq H^3(U')^{\vee}(-3).
\end{equation}
In order to simplify notations we call $V:=H^2(Y)_{\text{prim}}$. The isomorphism above implies the following equality of mixed Hodge numbers:

\begin{equation}\label{eq:isoMHN}
h^{p,q}(V)=h^{3-p,3-q}(H^3(U')).
\end{equation}
Since $V$ is a mixed Hodge substructure of $H^2(Y)$ it has weights $\leq 2$, and its Hodge filtration can be written as

\[\begin{array}{c}
0=F^3V\subset F^2V\subset F^1V\subset F^0V=V
\end{array}\]
while for $H^3(U')$ we have

\[\begin{array}{c}
0=F^4H^3(U')\subset F^3H^3(U')\subset F^1H^3(U')\subset F^0H^3(U')=H^3(U').
\end{array}\]
On $H^3(U')$ we also have the polar filtration:

\begin{equation*}
0=P^4 H^3(U')\subset\dots\subset P^1 H^3(U')=H^3(U').
\end{equation*}
Since the action of $T_{\phi}$ is compatible with all these filtrations, from (\ref{eq:isoMHN}), the inclusion $F^k H^3(U')\subseteq P^k H^3(U')$ given by \cite[Theorem 6.1.31]{D1} and the symmetry of mixed Hodge numbers we deduce

\begin{align}\label{eq:ineqs}
\begin{split}
h^{2,0}(V^{T_{\phi}})+h^{1,0}(V^{T_{\phi}})+h^{0,0}(V^{T_{\phi}})&\leq\dim P^3H^3(U')^{T_{\phi}}\\
h^{2,0}(V^{T_{\phi}})+2h^{1,0}(V^{T_{\phi}})+h^{0,0}(V^{T_{\phi}})+h^{1,1}(V^{T_{\phi}})&\leq\dim P^2H^3(U')^{T_{\phi}}.
\end{split}
\end{align}
We call now $R:=\CC[y,z,x_0,x_2]$, $f_Y\in R$ the polynomial defining $Y$ and $J_{f_Y}\subset R$ the associated Jacobian ideal; for $t=1,2,3$ we have maps 

\begin{equation}\label{eq:rec}
(R/J_{f_Y})_{tn-4}\twoheadrightarrow Gr_P^{4-t} H^3(U')=P^{4-t}H^3(U')/P^{5-t}H^3(U')
\end{equation}
Any class in $P^{k}H^3(U')$ has a representative of the form 

\begin{equation*}
\omega_h:=\frac{h\Omega}{f_Y^k}\hspace{1.55cm}\text{ with }h\in R_{kn-4}
\end{equation*}
(where $\Omega=ydz\wedge dx_0\wedge dx_2-zdy\wedge dx_0\wedge dx_2+x_0dy\wedge dz\wedge dx_2-x_2 dy\wedge dz\wedge dx_0$), and $T_{\phi}$ acts on it by multiplying $y$ and $z$ by $\eta_n$; this means that if $h(y,z,x_0,x_2)$ is an element of $(R/J_{f_S})_{kn-4}$ such that 

\begin{equation}\label{eq:condizione}
h(y,z,x_0,x_2)yzx_0x_2=h(\eta_n y,\eta_n z,x_0,x_2)\eta_n^2 yzx_0x_2
\end{equation} 
then the cohomology class $[\omega_h]\in H^3(U')$ is fixed by $T_{\phi}$. If we denote by $((R/J_{f_Y})_{tn-4})^{T_{\phi}}$ the elements of $(R/J_{f_Y})_{tn-4}$ satisfying condition (\ref{eq:condizione}), from (\ref{eq:rec}) we deduce 

\begin{equation}\label{eq:mappette}
(R/J_{f_Y})_{tn-4}^{T_{\phi}}\twoheadrightarrow Gr_P^{4-t} H^3(U')^{T_{\phi}}=P^{4-t}H^3(U')^{T_{\phi}}/P^{5-t}H^3(U')^{T_{\phi}}\hspace{0.5cm}\text{for }t=1,2,3.\\
\end{equation}
Let us compute the dimensions of the $(R/J_{f_Y})_{tn-4}^{T_{\phi}}$. A monomial $y^az^bx_0^cx_2^d$ satisfies condition (\ref{eq:condizione}) if and only if $a+b=kn-2$ for some $k\in\ZZ$ (*); since $J_{f_Y}$ contains $y^{n-1}$ and $z^{n-1}$, a monomial $y^az^bx_0^cx_2^d\in(R/J_{f_Y})_{tn-4}$ can satisfy (*) only for $k=1$; this implies in particular that $(R/J_{f_Y})_{n-4}^{T_{\phi}}=0$. From this we deduce that 

\begin{equation*}
Gr_P^3 H^3(U')^{T_{\phi}}=P^3 H^3(U')^{T_{\phi}}=0
\end{equation*}
which implies $Gr_P^2 H^3(U')^{T_{\phi}}=P^2 H^3(U')^{T_{\phi}}$; by (\ref{eq:ineqs}) we obtain

\begin{align*}
\dim V^{T_{\phi}}=h^{1,1}(V)^{T_{\phi}}\leq\dim Gr_P^2 H^3(U')^{T_{\phi}}.
\end{align*}
Since there are $n-1$ choices of non-negative $a,b<n-1$ that give $a+b=n-2$, we have $(n-1)^2$ monomials in $(R/J_{f_Y})_{2n-4}$ satisfying condition (\ref{eq:condizione}); this gives

\begin{equation}\label{eq:bound}
\dim V^{T_{\phi}}\leq (n-1)^2.
\end{equation}
Now we compute the dimension of $H^1(F_{y^n+z^n})$ by studying the Steenbrink spectra of the homogeneous isolated singularities of $\CC$ given by $y^n=0$ and $z^n=0$. 

We use the same notations as in Section 3. For $y^n$ we have $d=n$, $w=1$ and $M(y^n)=\CC\oplus\CC y\oplus\dots\oplus\CC y^{n-2}$, so the non-zero parts of $M(y^n)$ have weights $0,\dots,n-2$ and dimension $1$. In order to have $(\al+1)n-1=j$ for $j\in[0,n-2]$ we need $\al=\frac{j+1-n}{n}$, which implies $sp(y^n)=\sum_{j=0}^{n-2}(\frac{j+1-n}{n})$; this means the monodromy operator on $H^0(F_{y^n},\CC)$ has $n-1$ eigenspaces of dimension $1$ with eigenvalues $\zeta_n^a$ for $a\in[1,n-1]$ (and the same goes for $H(F_{z^n})$).

By Lemma \ref{lem:ThomIso} we deduce that $H^1(F_{y^n+z^n})$ has dimension $(n-1)^2$ and it is the direct sum of monodromy eigenspaces with eigenvalues $\eta_n^{a+b}$ for $a,b\in[1,n-1]$. The equality $\eta_n^{a+b}=\eta_n^k$ is satisfied by $n-2$ choices of the couple $(a,b)$ for $k\neq 0$, while for $k=0$ the choices are $n-1$: this means that in $H^1(F_{y^n+z^n})$ the fixed part under the monodromy action has dimension $n-1$, while all the other $n-1$ eigenspaces have dimension $n-2$.

Lemma \ref{lem:ThomIso} also allows us to write

\begin{equation*}
H^3(F_{g-f})^{T_{g-f}}=\bigoplus_{0\leq\al<1} H^1(F_g)_{1-\al}\otimes H^1(F_f)_{\al}
\end{equation*}
where the subscript $\al$ indicates the eigenspace relative to $e^{2\pi i\al}$. If we denote by $\epsilon_i$ the dimension of $H^1(F_f)_{\zeta_n^i}$ then $\epsilon_0=n-1$, so we can write the dimension of the right-hand side as

\begin{equation*}
(n-1)^2+\sum_{i=1}^{n-1}(n-2)\epsilon_i=(n-1)^2+(n-2)\sum_{i=1}^{n-1}\epsilon_i
\end{equation*}
From (\ref{eq:trick}) and (\ref{eq:bound}) we deduce that $H^3(F_{g-f})^{T_{g-f}}$ has dimension at most $(n-1)^2$, so $\epsilon_i=0$ for all $i\neq 0$, which means exactly that the Alexander polynomial of the arrangements we consider is trivial.

\begin{rmk}
Using \cite[Lemma 3.1]{PS} one can easily check that $\be_2(\Ab)=\be_3(\Ab)=0$ for $\Ab$ as in Theorem \ref{thm:AP}.
\end{rmk}

\begin{flushleft}
Department of Mathematics "Tullio Levi-Civita", University of Padua, Via Trieste 63, 35121 Padova, Italy.

\textit{E-mail address}: \verb+venturel@math.unipd.it+
\end{flushleft}


\begin{thebibliography}{90}
\bibitem{D1} Dimca A. - Singularities and topology of hypersurfaces. Universitext, Springer-Verlag, New York, 1992. 
\bibitem{D2} Dimca A. - On the Milnor monodromy of the irreducible complex reflection arrangements. arXiv:1606.04048. 
\bibitem{DP} Dimca A., Papadima S. - Finite Galois covers, cohomology jump loci, formality properties, and multinets. Ann. Sc. Norm. Super. Pisa Cl. Sci. \textbf{10} (2011), no. 2, 253-268. 
\bibitem{DSt} Dimca A., Sticlaru G. - On the Milnor monodromy of the exceptional reflection arrangement of type $G_{31}$. arXiv:1606.06615. 
\bibitem{De} Deligne P. - Th\`eorie de Hodge II, III. Publ. Math. IHES \textbf{40} (1971), 5-58 and \textbf{44} (1974), 5-77. 
\bibitem{FY} Falk M., Yuzvinsky S. - Multinets, resonance varieties, and pencils of plane curves. Compositio Math. \textbf{143} (2007), no. 4, 1069-1088. 
\bibitem{GNPP} Guill\'en F., Navarro Aznar V., Pascual Gainza P., Puerta F. - Hyperr\'esolutions cubiques et descente  cohomologique. Lecture Notes in Mathematics, vol. 1335, Springer-Verlag, Berlin, 1988,  Papers from the Seminar on Hodge-Deligne Theory held in Barcelona, 1982. 
\bibitem{H1} Hartshorne R. - On the de Rham cohomology of algebraic varieties. Publ. Math. I.H.E.S., \textbf{45} (1976), 5-99.
\bibitem{H2} Hartshorne R. - Residues and duality. Lecture Notes in Math. Springer, \textbf{20}, 1966.
\bibitem{Le} L\^e Dung Trang - Some remarks on relative monodromy. Real and complex singularities (Oslo 1976), Sijthoff and Noordhoff, Amsterdam (1977), 397-403. 
\bibitem{L1} Libgober A. - Alexander invariants of plane algebraic curves. Singularities, Part 2 (Arcata, Calif., 1981), 135-143, Proc. Sympos. Pure Math., \textbf{40}, Amer. Math. Soc., Providence, RI, 1983.
\bibitem{MPP} M\u{a}cinic A., Papadima S., Popescu R. - Modular equalities for complex reflection arrangements. Doc. Math. \textbf{22} (2017), 135-150. 
\bibitem{Mi} Milnor J. - Singular points of complex hypersurfaces. Annals of Math. Studies, vol. 61, Princeton Univ. Press, Princeton, NJ, 1968.
\bibitem{PS} Papadima S., Suciu A.I. - The Milnor fibration of a hyperplane arrangement: from modular resonance to algebraic monodromy. Proc. of the London Math. Soc \textbf{114}, no. 6 (2017), 961-1004.
\bibitem{PeSt} Peters C. A. M., Steenbrink J. - Mixed Hodge Structures. Ergebnisse der Mathematik und ihrer Grenzgebiete. 3. Folge. A Series of Modern Surveys in Mathematics [Results in Mathematics and Related Areas. 3rd Series. A Series of Modern Surveys in Mathematics], vol. 52, Springer-Verlag, Berlin, 2008.
\bibitem{V2} Voisin C. - Hodge theory and complex algebraic geometry II. Cambridge University Press, \textbf{77} (2003).
\end{thebibliography}
\end{document}